\documentclass[12pt]{article}

\usepackage{graphicx}
\usepackage{geometry}
\usepackage{amsmath,amssymb,amsthm}
\usepackage{booktabs}
\usepackage[labelsep=period]{caption}
\usepackage{graphicx,booktabs,bm,enumitem}
\usepackage{cases}
\usepackage{url}
\usepackage[longnamesfirst]{natbib}

\usepackage[T1]{fontenc}
\usepackage{newtxtext,newtxmath}

\newcommand{\footremember}[2]{
	\footnote{#2}
	\newcounter{#1}
	\setcounter{#1}{\value{footnote}}\
}
\newcommand{\footrecall}[1]{
	\footnotemark[\value{#1}]\
} 

\newcommand{\bb}[1]{\boldsymbol{#1}}

\DeclareMathOperator{\Var}{Var}
\DeclareMathOperator{\Expec}{E}
\DeclareMathOperator{\Prob}{P}

\newtheorem{thm}{Theorem}
\newtheorem{lem}{Lemma}

\title{Tests for validity of the semiparametric heteroskedastic transformation model}
\author{Marie Hu\v{s}kov\'a\footremember{uk}{Department of Probability and Mathematical Statistics, Charles University, Prague, Czech Republic}
	\and Simos G. Meintanis\footremember{nwu}{Unit for Business Mathematics and Informatics, North-West University, Potchefstroom, South~Africa}\footnote{Corresponding author: \texttt{simosmei@econ.uoa.gr}}
	\and Charl Pretorius\footrecall{uk}\footrecall{nwu}}

\begin{document}
\sloppy

\begin{titlepage}
\maketitle

\begin{abstract}
	There exist a number of tests for assessing the nonparametric heteroscedastic location-scale assumption. Here we consider a goodness-of-fit test for the more general hypothesis of the validity of this model under a parametric functional  transformation on the response variable. Specifically we consider testing for independence between the regressors and the errors in a model where the transformed response is just a location/scale shift of the error. Our criteria use the familiar factorization property of the joint characteristic function of the covariates under independence. The difficulty is that the errors are unobserved and hence one needs to employ properly estimated residuals in their place.  We study the limit distribution of the test statistics under the null hypothesis as well as under alternatives, and  also suggest  a resampling procedure in order to approximate the critical values of the tests. This resampling is subsequently employed in a series of Monte Carlo experiments that illustrate the finite-sample properties of the new test. We also investigate the performance of related test statistics for normality and symmetry of errors, and apply our methods on real data sets.
	\vspace{.1in}\\
	\noindent\textbf{Key words:} bootstrap test, heteroskedastic transformation, independence model, nonparametric regression
	\vspace{.1in}\\
	\noindent\textbf{AMS 2010 classification:} 62G08, 62G09, 62G10
\end{abstract}

\end{titlepage}

\section{Introduction}
At least since the seminal paper of \citet{box1964} transformations are applied to data sets in order to facilitate statistical inference. The goal of a certain transformation could be, among others, reduction of skewness, faster convergence to normality and better fit, linearity, and stabilization of variance. These aims, which may even be contradictory, are quite important as it is only under such assumptions that certain statistical procedures are applicable. Some of the issues raised when performing a certain transformation are discussed in varying settings by \citet{staniswalis1993}, \citet{quiroz1996}, \citet{yeo2000}, \citet{chen2002}, \citet{mu2007}, and \citet{meintanis2015}, and in the reviews of \citet{sakia1992} and \citet{horowitz2009}. Here we will consider goodness-of-fit (GOF)  tests for the (after-transformation) location-scale nonparametric heteroskedastic model
\begin{equation}\label{modelI}
	\mathcal{T}(Y) = m(\boldsymbol X) + \sigma(\boldsymbol X) \varepsilon,
\end{equation} where $\mathcal{T}(\cdot)$ is a transformation acting on the response $Y$, $m(\cdot)$ and $\sigma(\cdot)$ are unknown  functions, and where the error $\varepsilon$, having mean zero and unit variance, is supposed to be independent of the vector of covariates $\boldsymbol X$. The classical location-scale model, i.e.\ the model in \eqref{modelI} with $\mathcal{T}(Y)\equiv Y$, is a popular model that is often employed in statistics as well as in econometrics (see e.g., \citealp{racine2017a,brown2007,chen2005}), and there exist a number of approaches to test the validity of this model such as the classical Kolmogorov--Smirnov and Cram\'er--von Mises tests in  \citet{einmahl2008} and the test criteria in \citet{hlavka2011} which are based on the characteristic function. On the other hand the problem of goodness-of-fit (GOF) for the general model \eqref{modelI} under any given (fixed) transformation has only recently drawn attention in \citet{neumeyer2016} by means of classical methods. Here we deviate from classical approaches in that we employ the characteristic function (CF) instead of the distribution function (DF) as the basic inferential tool. As already mentioned the CF approach was also followed in analogous situations  by \citet{SZ2005}, \citet{hlavka2011}, and  \citet{huskova2018}, among others, and gave favorable results. 

The rest of the paper is outlined as follows.  In Section 2 we introduce the null hypothesis of independence between the regressor and the error term and formulate the new test statistic. The asymptotic distribution of the test statistic under the null hypothesis as well as under alternatives is studied in Section 3, while in Section 4 we particularize our method, and suggest a bootstrap procedure for its calibration. Section 5 presents the results of a Monte Carlo study. Since one aim of transforming the response is to achieve normality, or more generally symmetry after-transformation, we also investigate the small-sample performance of CF-based statistics for these problems with reference to the regression errors. Real data applications are also included. We finally conclude with discussion of our findings in  Section 7. Some technical material is deferred to the Appendix.

\section{Null hypothesis and test statistics}

Note that underlying equation  \eqref{modelI} is the potentiality of obtaining a location/scale structure following a certain transformation of the response. Hence there exist a number of inferential problems similar to the problems faced in the non-transformation case, i.e., when  $\mathcal{T}(Y)\equiv Y$. For the homoskedastic version of model \eqref{modelI} with $\sigma \equiv$ constant, estimation methods were proposed by \citet{linton2008} and \citet{colling2015}. The classical problem of fitting a specific regression function was considered by \citet{colling2016,colling2017}, respectively, by means of the DF and the integrated regression function, while regressor significance using Bieren's CF-type approach  was considered by \citet{allison2018}. On the other hand, the problem of GOF of the model itself is studied by \citet{huskova2018} only in the homoskedastic case, whereas the single existing work for GOF testing with the more general heteroskedastic model is that of \citet{neumeyer2016} which is based on the classical pair of Kolmogorov--Smirnov/Cram\'er--von Mises functionals.

Here we are concerned with the GOF test for a fixed parametric transformation ${\cal{Y}}_{\boldsymbol \vartheta}=\mathcal{T}_{\boldsymbol \vartheta}(Y)$,  indexed by a parameter ${\bb{\vartheta}} \in \Theta$. Specifically on the basis of  independent copies $(Y_j,\boldsymbol X_j),\,j=1,\ldots, n$, of $(Y,\boldsymbol X)\in \mathbb R\times \mathbb R^p$ we wish to test the null hypothesis
\begin{equation}\label{null}
{\cal{H}}_0 : \exists \, \bb{\vartheta}_0 \in \Theta \text{\, such that \,}
\varepsilon_{\bb{\vartheta}_0}(Y,\boldsymbol X) \bot \boldsymbol X,
\end{equation}
where  $\bot$ denotes stochastic independence, and
\begin{equation}\label{error}
\varepsilon_{\bb{\vartheta}}(Y,\boldsymbol X)=\frac{{\cal{Y}}_{\boldsymbol \vartheta}-m_{\bb{\vartheta}}(\boldsymbol X)}{\sigma_{\bb{\vartheta}}(\boldsymbol X)},
\end{equation}
with $m_{\bb{\vartheta}}(\boldsymbol X):=\mathbb E({\cal{Y}}_{\boldsymbol \vartheta}|\boldsymbol X)$ and $\sigma^2_{\bb{\vartheta}}(\boldsymbol X):={\rm{Var}}({\cal{Y}}_{\boldsymbol \vartheta}|\boldsymbol X)$ being the mean and variance, respectively, of the transformed response conditionally on the covariate vector $\boldsymbol X$, and $\Theta\subseteq \mathbb R^q, \ q\geq 1$. Note that the validity of the null hypothesis ${\cal{H}}_0$  is tantamount to the existence of a (true) value $\boldsymbol \vartheta_0$ of $\boldsymbol \vartheta$ which if substituted in this specific parametric transformation
and applied on the response will render the location/scale structure of model \eqref{modelI}. To avoid confusion we emphasize that while the transformation is indeed parametric, the regression and the heteroskedasticity functions, respectively $m_{\bb{\vartheta}}(\cdot)$ and $\sigma_{\bb{\vartheta}}(\cdot)$, and apart from their implicit dependence on the particular transformation $\mathcal{T}_{\boldsymbol \vartheta}(\cdot)$ and the associated parameter $\bb\vartheta$, they are both viewed and estimated within an entirely nonparametric context. Therefore  our model can be labelled as a semiparametric model, i.e. parametric in the transformation but nonparametric in its location and scale functions.


To motivate our test statistic write $\varphi_{\boldsymbol X,\varepsilon_{\bb{\vartheta}}}$ for the joint CF of $(\boldsymbol X,\varepsilon_{\bb{\vartheta}})$, and $\varphi_{\boldsymbol X}$ and $\varphi_{\varepsilon_{\bb{\vartheta}}}$ for the marginal CFs of $\boldsymbol X$ and $\varepsilon_{\bb{\vartheta}}$, respectively, and recall that the null hypothesis in ({\ref{null})  equivalently implies that $\varphi_{\boldsymbol X,\varepsilon_{\bb{\vartheta}_0}}=\varphi_{\boldsymbol X}\varphi_{\varepsilon_{\bb{\vartheta}_0}}$, so that by following the approach in \citet{huskova2018}, the suggested test procedure will be based on the criterion
\begin{equation} \label{testst}
\Delta_{n,W}= n \int_{-\infty}^\infty \int_{\mathbb R^p} |\widehat \varphi(t_1,\boldsymbol t_2)-\widehat \varphi_{\boldsymbol X}(\boldsymbol t_2)\widehat
\varphi_{\widehat \varepsilon}(t_1)|^2
W(t_1,\boldsymbol t_2) dt_1d\boldsymbol t_2,
\end{equation}
where
$$
	\widehat \varphi(t_1,\boldsymbol t_2)=\frac{1}{n} \sum_{j=1}^n \exp\{i \boldsymbol t_2^\top \boldsymbol X_j+it_1 \widehat \varepsilon_j\}
 $$
is the joint empirical CF, and  $\widehat \varphi_{ \boldsymbol X}({\boldsymbol{t_2}})$ and $\widehat \varphi_{\widehat \varepsilon}(t_1)$ are the empirical marginal CFs resulting from $\widehat \varphi(t_1,\boldsymbol t_2)$ by setting $t_1=0$ resp. ${\boldsymbol {t}}_2=\boldsymbol 0$. These three quantities serve as estimators of $\varphi_{\boldsymbol X,\varepsilon_{\bb{\vartheta}_0}}$, $\varphi_{ \boldsymbol X}$ and $\varphi_{\varepsilon_{{\bb{\vartheta}}_0}}$, respectively, and they will be computed by means of   properly estimated residuals $\widehat \varepsilon_j=({\widehat {\cal{Y}}}_j-\widehat m(\boldsymbol X_j))/\widehat \sigma(\boldsymbol X_j), \ j=1,\ldots,n$. We note that in these residuals the observed response is parametrically transformed by means of ${\widehat {\cal{Y}}}_j=\mathcal{T}_{\widehat{\boldsymbol{\vartheta}}}(\boldsymbol X_j)$, using the particular transformation under test and the corresponding estimate $\widehat {\boldsymbol \vartheta}$ of the transformation parameter $\boldsymbol \vartheta$. Other than that  and as already mentioned, the estimate $\widehat m(\cdot)$ of the regression function as well as the heteroskedasticity estimate $\widehat \sigma(\cdot)$ are obtained entirely nonparametrically. Having said this, we often suppress the index $\widehat {\boldsymbol \vartheta}$ in these estimates. Clearly for any weight function $W$ satisfying $W(\cdot,\cdot)\geq 0$, the test statistic $\Delta_{n,W}$ defined in \eqref{testst} is expected to be large under alternatives, and therefore large values indicate that the null hypothesis is violated.

\section{Theoretical results}
We now consider theoretical properties of the introduced test statistics. More precisely, we  present the limit distribution of our test statistics under both the null as well as alternative hypotheses. Since the assumptions are quite technical they are deferred to the Appendix.

We first introduce some required notation.
For $\bb{\vartheta}\in \Theta$, define
\begin{equation*}
	m_{\bb{\vartheta}} (\boldsymbol X_{j})= \Expec\big({\mathcal{T}}_{\bb{\vartheta}} (Y_j)| \boldsymbol X_{j}\big)
	\qquad\text{and}\qquad
	\sigma^2_{\bb{\vartheta}} (\boldsymbol X_{j})= \Var \big({\mathcal{T}}_{\bb{\vartheta}} (Y_j)| \boldsymbol X_{j}\big).
\end{equation*}
Also define kernel estimators of $ m_{\bb {\vartheta}}(\bb{x})$ and $\sigma^2_{\bb {\vartheta}}(\bb{x})$, $\bb{x}=(x_1,\ldots, x_{p})^\top$, by
  \begin{align*}
    \widehat m_{\bb {\vartheta}}(\bb{x})&=\frac{1}{\widehat f(\bb{x})}
    \frac{1}{nh^{p}} \sum_{v=1}^n K\Big(\frac{ \bb{x}-\bb{X}_{v}}{h}\Big) {\mathcal{T}}_{\bb {\vartheta}}(Y_j),\\
     \widehat \sigma^2_{\bb {\vartheta}}(\bb{x})&=\frac{1}{\widehat f(\bb{x})}
    \frac{1}{nh^{p}} \sum_{v=1}^n  K\Big(\frac{ \bb{x}-\bb{X}_{v}}{h}\Big)\Big({\mathcal{T}}_{\bb {\vartheta}}(Y_j)-\widehat m_{\bb {\vartheta}}(\bb{X_j})\Big)^2,
    \end{align*}
respectively, where $K(\cdot)$ and $h=h_n$ are a kernel and a bandwidth, and
\[
	\widehat f(\bb x)=\frac{1}{nh^p}\sum_{v=1}^n K\Big(\frac{ \bb{x}-\bb{X}_{v}}{h}\Big)
\]
is a kernel estimator of the density of $\bb X_j$. Finally, let
\begin{equation}\begin{split}\label{residual}
	\varepsilon_{\bb{\vartheta},j}=\frac{ {\mathcal{T}}_{\bb {\vartheta}}(Y_j)-m_{\bb {\vartheta}}(\bb{X}_{j})}{\sigma_{\bb{\vartheta}} (\boldsymbol X_{j})},
	\qquad
	\varepsilon_{j}=\varepsilon_{\bb{\vartheta}_0,j},
	\qquad
  \widehat \varepsilon_j= \widehat \varepsilon_{\widehat{ \bb{\vartheta}},j}=\frac{ {\mathcal{T}}_{\widehat{\bb {\vartheta}}}(Y_j)-\widehat m_{\widehat{\bb {\vartheta}}}(\bb{X}_{j})}{\widehat \sigma_{\widehat{\bb{\vartheta}}} (\boldsymbol X_{j})},
\end{split}\end{equation}
where $\widehat{\bb {\vartheta}}$ is a $\sqrt n$-consistent estimator of $\bb {\vartheta}_0$.
It is assumed that  $\widehat{\bb {\vartheta}}$ allows an asymptotic representation as shown in assumption (A.7).

Now we formulate  the limit distribution of the test statistic under the null hypothesis:

\begin{thm}\label{thm1}
Let  assumptions (A.1)--(A.8) be
satisfied. Then  under the null hypothesis, as $n\to \infty$,
\[
\Delta_{n,W} \stackrel{d}{\to}
 \int_{\mathbb R^{p+1}} | Z(t_1,\boldsymbol t_2)|^2
W(t_1,\boldsymbol t_2) dt_1d\boldsymbol t_2,
\]
 where    $\{Z(t_1,\boldsymbol t_2),\, t \in \mathbb R^{p+1}\}$ is a Gaussian
process  with  zero mean function and the same covariance structure as
the process $\{Z_0(t_1,\boldsymbol t_2),\,( t_1,\boldsymbol t_2) \in \mathbb R^{p+1}\}$
defined as
\begin{equation}\begin{split}\label{process1}
Z_0(t_1, \boldsymbol t_2)={}& \{\cos(t_1 \varepsilon_1) - C_{\varepsilon}(t_2)\} g_+(\boldsymbol t_2^\top \boldsymbol X_1)+
 \{\sin(t_1 \varepsilon_1) - S_{\varepsilon}(t_1)\} g_-(\boldsymbol t_2^\top \boldsymbol X_1)
\\
&{}+ t_1\varepsilon_1\Big(S_{\varepsilon}(t_1) g_+(\boldsymbol t_2^\top \boldsymbol X_1)
 +C_{\varepsilon}(t_1) g_-(\boldsymbol t_2^\top \boldsymbol X_1)\Big)
 \\
&{}+\frac{1}{2}t_1(\varepsilon_1^2-1)\Big( C_{\varepsilon}'(t_1) g_+(\boldsymbol t_2^\top \boldsymbol X_1)-  S_{\varepsilon}'(t_1) g_-(\boldsymbol t_2^\top \boldsymbol X_1)\Big)
\\
&{}+ \boldsymbol g^\top(Y_1,\boldsymbol X_1)  \boldsymbol H_{\vartheta_0,q} (t_1,\boldsymbol t_2),
\end{split}\end{equation}
where $C_{\varepsilon}$ and $S_{\varepsilon}$ are the real and the imaginary part of the CF of $\varepsilon_1$.
Similarly, $C_X$ and $S_X$ denote the real and
the imaginary part of the CF of $\boldsymbol X_j$.  Also, $\boldsymbol g (Y_1,\boldsymbol X_1)$ is specified in  Assumption (A.7) and
\begin{equation*}
	\begin{split}
		g_+(\boldsymbol t_2^\top \boldsymbol X_1) ={}& \cos(\boldsymbol t_2^\top \boldsymbol X_1) +\sin (\boldsymbol t_2^\top \boldsymbol X_1)-C_X(\boldsymbol t_2)-S_X(\boldsymbol t_2),\\
		g_-(\boldsymbol t_2^\top \boldsymbol X_1) ={}& \cos(\boldsymbol t_2^\top \boldsymbol X_1) -\sin (\boldsymbol t_2^\top \boldsymbol X_1)-C_X(\boldsymbol t_2)+S_X(\boldsymbol t_2),\\
		\boldsymbol H_{\boldsymbol \vartheta,q} (t_1,\boldsymbol t_2) ={}&
		E\Big[\Big(\frac{\partial \mathcal{T}_{\boldsymbol \vartheta} (Y_1)}{\partial\vartheta_1},\ldots ,
		\frac{\partial \mathcal{T}_{\boldsymbol \vartheta} (Y_1)}{\partial\vartheta_q}   \Big)^\top\\
&{}\times  \Big\{\frac{1}{\sigma (\boldsymbol  X_1)}\Big(-t_1\sin(t_1\varepsilon_1)g_+(\boldsymbol t_2^\top\boldsymbol X_1)
 +t_1\cos(t_1\varepsilon_1)g_-(\boldsymbol t_2^\top\boldsymbol X_1)\Big)\\
&{} + \frac{1}{\sigma (\boldsymbol  X_2)} (1+\varepsilon_1 \varepsilon_2)\Big(t_1\sin(t_1\varepsilon_2)g_+(\boldsymbol t_2^\top\boldsymbol X_2)
 -t_1\cos(t_1\varepsilon_2)g_-(\boldsymbol t_2^\top\boldsymbol X_2)\Big)\Big\}\Big].
 \end{split}
\end{equation*}
\end{thm}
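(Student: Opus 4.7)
The overall strategy is to establish weak convergence of the centred process
$$Z_n(t_1,\boldsymbol t_2):=\sqrt n\bigl\{\widehat\varphi(t_1,\boldsymbol t_2)-\widehat\varphi_{\boldsymbol X}(\boldsymbol t_2)\widehat\varphi_{\widehat\varepsilon}(t_1)\bigr\}$$
in the weighted Hilbert space $L^2(\mathbb R^{p+1},W\,dt_1 d\boldsymbol t_2)$ to a centred Gaussian limit with the stated covariance, and then to conclude by continuous mapping applied to the squared-norm functional. The heart of the argument is a uniform stochastic expansion reducing $Z_n$ to an average of iid elements, $Z_n=n^{-1/2}\sum_{j=1}^n Z_{0,j}(t_1,\boldsymbol t_2)+o_P(1)$.

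The first step is the algebraic identity
$$\widehat\varphi-\widehat\varphi_{\boldsymbol X}\widehat\varphi_{\widehat\varepsilon}=\frac{1}{n}\sum_j\bigl\{e^{i\boldsymbol t_2^\top\boldsymbol X_j}-\widehat\varphi_{\boldsymbol X}(\boldsymbol t_2)\bigr\}\bigl\{e^{it_1\widehat\varepsilon_j}-\widehat\varphi_{\widehat\varepsilon}(t_1)\bigr\},$$
followed by the Taylor expansion
$$e^{it_1\widehat\varepsilon_j}=e^{it_1\varepsilon_j}\bigl\{1+it_1(\widehat\varepsilon_j-\varepsilon_j)-\tfrac12 t_1^2(\widehat\varepsilon_j-\varepsilon_j)^2\bigr\}+R_{n,j}(t_1),$$
where $R_{n,j}$ is uniformly negligible in the relevant weighted norm. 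The principal ``observed-error'' part of this decomposition, which treats $\varepsilon_j$ as if it were known, immediately produces the first two lines of \eqref{process1} through the standard empirical-covariance argument, with the combinations $\cos\pm\sin$ appearing as the functions $g_+$ and $g_-$ after collecting real and imaginary parts.

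The technical core is the expansion of $\widehat\varepsilon_j-\varepsilon_j$. Writing
$$\widehat\varepsilon_j-\varepsilon_j=\frac{\mathcal T_{\widehat{\boldsymbol\vartheta}}(Y_j)-\mathcal T_{\boldsymbol\vartheta_0}(Y_j)}{\sigma(\boldsymbol X_j)}-\frac{\widehat m(\boldsymbol X_j)-m(\boldsymbol X_j)}{\sigma(\boldsymbol X_j)}-\varepsilon_j\frac{\widehat\sigma(\boldsymbol X_j)-\sigma(\boldsymbol X_j)}{\sigma(\boldsymbol X_j)}+\text{h.o.t.},$$
one substitutes (i) the linear representation of $\widehat{\boldsymbol\vartheta}-\boldsymbol\vartheta_0$ from (A.7), and (ii) the standard stochastic expansions of the Nadaraya--Watson estimators: to leading order, $\widehat m(\boldsymbol X_j)-m(\boldsymbol X_j)$ is a kernel-weighted average of $\sigma(\boldsymbol X_v)\varepsilon_v$, whereas $\widehat\sigma^2(\boldsymbol X_j)-\sigma^2(\boldsymbol X_j)$ is a kernel-weighted average of $\sigma^2(\boldsymbol X_v)(\varepsilon_v^2-1)$. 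Interchanging the order of summation and evaluating the resulting inner kernel sums by their conditional expectations (valid since $\boldsymbol X\bot\varepsilon$ under $\mathcal H_0$) collapses each double sum into a single average over $v$. The mean-estimation contribution supplies the third line of \eqref{process1}, $t_1\varepsilon_1(S_\varepsilon g_++C_\varepsilon g_-)$, while the variance-estimation contribution, mediated by the factor $t_1 e^{it_1\varepsilon}$ interacting with $\varepsilon^2-1$ under iterated expectation, produces the fourth line involving $(\varepsilon_1^2-1)$ and the derivatives $C'_\varepsilon,S'_\varepsilon$. Finally, all pieces proportional to $\widehat{\boldsymbol\vartheta}-\boldsymbol\vartheta_0$---the direct one from $\mathcal T_{\widehat{\boldsymbol\vartheta}}-\mathcal T_{\boldsymbol\vartheta_0}$ and the indirect ones arising from the $\boldsymbol\vartheta$-dependence of $\widehat m$ and $\widehat\sigma^2$---are grouped into $\boldsymbol g^\top\boldsymbol H_{\boldsymbol\vartheta_0,q}$, the factor $1+\varepsilon_1\varepsilon_2$ in $\boldsymbol H_{\boldsymbol\vartheta_0,q}$ reflecting the additive combination of the mean- and variance-related effects at the second observation.

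Once this expansion has been established uniformly in $(t_1,\boldsymbol t_2)$, with an integrable envelope secured by the moment assumptions on $W$ in (A.8), the CLT for iid random elements of a separable Hilbert space yields weak convergence of $Z_n$ to the Gaussian process $Z$, and the continuous mapping theorem applied to $f\mapsto\int |f|^2 W$ completes the proof. The principal obstacle is the uniform control of the remainders: in particular, recognising that the second-order term from $\widehat\sigma^2$ is of the same $O_P(n^{-1/2})$ order as the first-order terms and therefore must be retained, handling the nonparametric bias over the non-compact domain, and justifying the double-sum reduction for residuals depending on $\widehat{\boldsymbol\vartheta}$ via uniform rates for kernel-estimator derivatives. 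This heteroskedastic correction is what distinguishes the limit from the homoskedastic analogue obtained in \citet{huskova2018}.
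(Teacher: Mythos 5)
Your proposal is correct and follows essentially the same route as the paper: both reduce $\sqrt n\{\widehat\varphi-\widehat\varphi_{\boldsymbol X}\widehat\varphi_{\widehat\varepsilon}\}$ to an i.i.d.\ average via a Taylor expansion of the trigonometric terms in $\widehat\varepsilon_j-\varepsilon_j$, decompose that difference into the $\widehat m$-, $\widehat\sigma^2$- and $\widehat{\boldsymbol\vartheta}$-contributions, collapse the resulting kernel double sums by a uniform law of large numbers, and finish with the Hilbert-space CLT and continuous mapping (your product-centred identity versus the paper's $J_{n,1}+J_{n,2}$ split with population centering is only a cosmetic difference). The one inaccurate aside is the worry about ``nonparametric bias over the non-compact domain,'' since (A.1) assumes the covariates have compact support.
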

\noindent The proof is postponed to the Appendix.

The limit distribution under the null hypothesis is a weighted $L_2$-type functional of a Gaussian process. Concerning the structure of $Z_0(\cdot, \cdot)$, the first row in \eqref{process1} corresponds to the situation when  $\boldsymbol \vartheta_0$, $\varepsilon$, $m(\cdot)$, $\sigma^2(\cdot)$ are known, the second row reflects the influence of the estimator of $m(\cdot)$, the third one of the estimator of $\sigma^2(\cdot)$ while the last row reflects the influence of the estimator of $\boldsymbol{\vartheta}$.

To get an approximation of the critical value one estimates the unknown quantities and simulates the  limit distribution described above with unknown parameters replaced by their estimators. However, the bootstrap described in Section~\ref{sec_32} is probably more useful.

Concerning the consistency of the newly proposed test, note that
if $H_0$ is not true, there is no  parameter in $\bm\vartheta$ that leads to independence,
i.e.
$$
\forall \boldsymbol \vartheta,\quad  \varphi_{\boldsymbol X,\varepsilon_{\bb{\vartheta}}}\neq\varphi_{\boldsymbol X}\varphi_{\varepsilon_{\bb{\vartheta}}}.
$$
The main assertion under alternatives reads as follows.

\begin{thm}\label{thm2} The estimator $\bm{\widehat{\vartheta}}$ converges in probability to some ${\bm\vartheta}^0\in \Theta$  and let  $\bb{\vartheta}^0\in\Theta$  satisfy
\begin{equation}\label{alt}
 \int_{ \mathbb R^p} | \varphi_{\boldsymbol X,\varepsilon_{\bb{\vartheta}^0}} (\boldsymbol t_2,t_1)-\varphi_{\boldsymbol X} (\boldsymbol t_2)  \varphi_{\varepsilon_{\bb{\vartheta}^0}} ( t_1)
|^2
W(t_1,\boldsymbol t_2) dt_1d\boldsymbol t_2>0.
\end{equation}
  Let  assumptions (A.1)--(A.4),  (A.8) and (A.9) be satisfied and  let also (A.5), (A.6) with $s_1=s_2=1  $  and $\bb{\vartheta}_0$ replaced by $\bb{\vartheta}^0$. As soon as $n\to \infty$,
  $$
  \Delta_{n,W} \stackrel{P}{\to} \infty.
  $$
  \end{thm}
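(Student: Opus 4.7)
The strategy is to show that $n^{-1}\Delta_{n,W}$ converges in probability to the strictly positive constant
$$
\Delta^0 := \int_{\mathbb R^{p+1}} |\varphi_{\boldsymbol X,\varepsilon_{\bb{\vartheta}^0}}(\boldsymbol t_2, t_1) - \varphi_{\boldsymbol X}(\boldsymbol t_2)\varphi_{\varepsilon_{\bb{\vartheta}^0}}(t_1)|^2 W(t_1,\boldsymbol t_2)\, dt_1\, d\boldsymbol t_2,
$$
which is positive by (\ref{alt}); this immediately gives $\Delta_{n,W} \stackrel{P}{\to} \infty$. Since the squared modulus in the integrand of $n^{-1}\Delta_{n,W}$ is pointwise bounded by $4$ (each empirical CF having modulus at most one) and $W$ is integrable by assumption, dominated convergence reduces the problem to showing convergence of the integrand in probability for each fixed $(t_1,\boldsymbol t_2)$.

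For fixed $(t_1,\boldsymbol t_2)$, I would decompose
$$
\widehat \varphi(t_1,\boldsymbol t_2) - \widehat \varphi_{\boldsymbol X}(\boldsymbol t_2)\widehat \varphi_{\widehat \varepsilon}(t_1) = A_n(t_1,\boldsymbol t_2) + B_n(t_1,\boldsymbol t_2),
$$
where $A_n$ is the same expression but with each $\widehat \varepsilon_j$ replaced by the population residual $\varepsilon_{\bb{\vartheta}^0,j} = (\mathcal T_{\bb{\vartheta}^0}(Y_j) - m_{\bb{\vartheta}^0}(\boldsymbol X_j))/\sigma_{\bb{\vartheta}^0}(\boldsymbol X_j)$, and $B_n$ is the substitution error. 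Since the pairs $(\boldsymbol X_j, \varepsilon_{\bb{\vartheta}^0,j})$ are iid, an ordinary weak law of large numbers gives
$$
A_n(t_1,\boldsymbol t_2) \stackrel{P}{\to} \varphi_{\boldsymbol X,\varepsilon_{\bb{\vartheta}^0}}(\boldsymbol t_2, t_1) - \varphi_{\boldsymbol X}(\boldsymbol t_2)\varphi_{\varepsilon_{\bb{\vartheta}^0}}(t_1).
$$
For $B_n$ I would use the Lipschitz bound $|e^{it_1 \widehat \varepsilon_j} - e^{it_1 \varepsilon_{\bb{\vartheta}^0,j}}| \leq |t_1|\cdot|\widehat\varepsilon_j - \varepsilon_{\bb{\vartheta}^0,j}|$ termwise, so that $|B_n| \leq c|t_1|\, n^{-1}\sum_{j=1}^n |\widehat\varepsilon_j - \varepsilon_{\bb{\vartheta}^0,j}|$. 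Under (A.9) the estimator $\widehat{\bb{\vartheta}}$ is consistent for $\bb{\vartheta}^0$, and under (A.1)--(A.6) with $s_1 = s_2 = 1$ and $\bb{\vartheta}_0$ replaced by $\bb{\vartheta}^0$, the kernel estimators $\widehat m_{\widehat{\bb{\vartheta}}}$ and $\widehat \sigma_{\widehat{\bb{\vartheta}}}$ converge uniformly (on the trimming region) to $m_{\bb{\vartheta}^0}$ and $\sigma_{\bb{\vartheta}^0}$, which together force this average to vanish in probability.

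The main obstacle is the uniform (on-average) control of the residual discrepancy $|\widehat\varepsilon_j - \varepsilon_{\bb{\vartheta}^0,j}|$ in regions of the covariate space where the density estimator $\widehat f$ is close to zero, since the residuals are ratios of kernel estimators. This is dealt with via the trimming indicator built into (A.3), after which a standard first-order expansion of the ratio-type estimators around $m_{\bb{\vartheta}^0}$ and $\sigma_{\bb{\vartheta}^0}$ delivers the required bound; this step is considerably simpler than in the proof of Theorem~\ref{thm1}, since here we only need the leading $o_P(1)$ approximation rather than the refined expansions that give rise to the second through fourth rows of $Z_0$ in (\ref{process1}). Combining the convergence of $A_n$, the negligibility of $B_n$, and dominated convergence on $\mathbb R^{p+1}$ with weight $W$, we obtain $n^{-1}\Delta_{n,W} \stackrel{P}{\to} \Delta^0 > 0$, from which $\Delta_{n,W} \stackrel{P}{\to} \infty$ is immediate.
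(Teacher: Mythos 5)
Your proposal follows essentially the same route as the paper: the paper's proof of Theorem~\ref{thm2} simply asserts that, arguing as in Theorem~\ref{thm1} and Lemma~\ref{lem1}, one obtains $n^{-1}\Delta_{n,W}\stackrel{P}{\to}\int|\varphi_{\boldsymbol X,\varepsilon_{\bb{\vartheta}^0}}-\varphi_{\boldsymbol X}\varphi_{\varepsilon_{\bb{\vartheta}^0}}|^2W>0$, which is exactly your strategy, and your LLN-plus-substitution-error decomposition is a reasonable way to fill in the omitted details. One small correction: there is no trimming indicator in assumption (A.3); the control of the kernel-estimator denominators comes instead from the compact support in (A.1) together with the density being bounded away from zero on that support in (A.4).
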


  The proof is deferred to the Appendix.

  Theorems 1 and 2 imply consistency of the test and also that large values of $\Delta_{n,W}$ indicate that the null hypothesis is violated.

\section{Computations and resampling}\label{sec_3}


\subsection{Computations}\label{sec_31}
Following Hu\v{s}kov\'a et al. (2018), we impose the decomposition $W(t_1,\boldsymbol t_2)=w_1(t_1)w_2(\boldsymbol t_2)$ on the weight function. If in addition the individual weight functions $w_m(\cdot), \ m=1,2$ satisfy all other requirements stated in Assumption (A.8) of the Appendix, then the test statistic in \eqref{testst} takes the form
\begin{equation} \label{testsum}
\Delta_{n,W}=\frac{1}{n} \sum_{j,k=1}^n I_{1,jk}I_{2,jk}+\frac{1}{n^3} \sum_{j,k=1}^n I_{1,jk}  \sum_{j,k=1}^n I_{2,jk}- \frac{2}{n^2} \sum_{j,k,\ell=1}^n I_{1,jk}I_{2,j\ell},
\end{equation}
where $I_{1,jk}:=I_{w_1}(\widehat \varepsilon_{jk})$, $I_{2,jk}:=I_{w_2}(\boldsymbol X_{jk})$, with $\boldsymbol X_{jk}=\boldsymbol X_{j}-\boldsymbol X_{k}$  and
$\widehat \varepsilon_{jk}=\widehat \varepsilon_{j}-\widehat \varepsilon_{k}, \ j,k=1,\ldots,n$, and
\begin{equation} \label{INTK}
I_{w_m}(\boldsymbol x) = \int \cos(\boldsymbol t^\top \boldsymbol x)w_m(\boldsymbol t){\rm{d}}\boldsymbol t, \quad m=1,2.
\end{equation}

The weight function $w_m(\cdot)$ in \eqref{INTK} may be chosen in a way that facilitates integration which is extremely important in high dimension. To this end notice that if $w_m(\cdot)$ is replaced by a spherical density, then the right-hand side of \eqref{INTK} gives (by definition) the CF corresponding to $w_m(\cdot)$ computed at the argument $\boldsymbol x$. Furthermore recall   that within the class of all spherical distributions, the integral in \eqref{INTK} depends on $\boldsymbol x$ only via its usual Euclidean norm $\|\boldsymbol x\|$, and specifically  $I_{w_m}(\boldsymbol x)=\Psi(\|\boldsymbol x\|)$, where the functional form of the univariate function $\Psi(\cdot)$ depends on the underlying subfamily of spherical distributions. In this connection $\Psi(\cdot)$ is called the ``characteristic kernel'' of the particular subfamily; see \citet{fang1990}. Consequently the test statistic in \eqref{testsum} becomes a function of $\Psi(\|\boldsymbol x\|)$ alone. Subfamilies of spherical distributions with simple kernels is the class of spherical stable distributions with $\Psi^{(S)}_\gamma(u)=e^{-u^\gamma}$, $0<\gamma \leq 2$,  and the class of generalized multivariate Laplace distributions  with $\Psi^{(L)}_\gamma(u)=(1+u^2)^{-\gamma}, \ \gamma>0$. For more information on these particular cases the reader is referred to \citet{nolan2013}, and to \citet{kozubowski2013}, respectively. For further use we simply note that interesting special cases of spherical stable distributions are the Cauchy distribution and the normal distribution corresponding to $\Psi^{(S)}_\gamma$ with $\gamma=1$ and $\gamma=2$, respectively, while the classical multivariate Laplace distribution results from  $\Psi^{(L)}_\gamma$ for $\gamma=1$.

\subsection{Resampling}\label{sec_32}
Recall that the null hypothesis ${\cal{H}}_0$ in ({\ref{null}) corresponds to model \eqref{modelI} in which both the true value of transformation parameter $\boldsymbol \vartheta$ as well as the error density are unknown. In this connection, and since, as was noted in Section 2, the asymptotic distribution of the test criterion under the null hypothesis depends on these quantities, among other things, we provide here a resampling scheme which can be used in order to compute critical points and actually carry out the test. The resampling scheme, which was proposed by \citet{neumeyer2016}, involves resampling from the observed ${\bm {X}}_j$ and independently constructing the bootstrap errors by smoothing the residuals. The bootstrap model then fulfils the null hypothesis since
\[
\frac{\mathcal{T}_{\widehat{\bb{\vartheta}}}(Y_j^* )-\Expec^*\mathopen{}\big( \mathcal{T}_{\widehat{\bb{\vartheta}}}(Y_j^* ) \,\big|\, {\boldsymbol X}_j^* \big)}
{\sqrt{\Var^*\mathopen{}\big( \mathcal{T}_{\widehat{\bb{\vartheta}}}(Y_j^* ) \,\big|\, {\boldsymbol X}_j^* \big)}}
:= \frac{\varepsilon^*_j}{\sqrt{1+a_n^2}} \; \bot^* \; {\boldsymbol X}_j^*,
\]
where $\mathbb \Expec^*$ and $\mathbb \Var^*$ denotes the conditional expectation
and variance and $\bot^*$ the conditional independence given the original sample.

We now describe the resampling procedure. Let $a_n$ be a positive smoothing parameter such that
$a_n \to 0$ and $n a_n \to \infty$, as $n\to\infty$.
Also, denote by $\{\xi_j\}_{j=1}^n$ a sequence of  random variables which are drawn independently of any other stochastic
quantity involved in the test criterion.
The bootstrap procedure is as follows:
\begin{enumerate}
\item Draw $\bb X_1^*,\dots,\bb X_n^*$ with replacement from $\bb X_1,\dots,\bb X_n$.
\item\label{itm:bootstraperrors} Generate i.i.d.\ random variables $\{\xi_j\}_{j=1}^n$ with a standard normal distribution
			and let $\varepsilon^*_j=a_n\xi_j+\widehat \varepsilon_j, \ j=1,...,n$, with
			$\widehat \varepsilon_j$ defined in \eqref{residual}.
\item Compute the bootstrap responses
			$Y^*_j={\cal {T}}^{-1}_{\widehat{{\bb{\vartheta}}}}(\widehat m_{\widehat{\bm{\vartheta}}}({\bb X}_{j}^*)+\widehat\sigma_{\widehat{\bm\vartheta}}({\bb X}_{j}^*)\varepsilon^*_j)$, $j=1,\ldots,n$.
\item On the basis of the observations $(Y^*_j,\bb X_j^*)$, $j=1,\dots,n$, refit the model and obtain the bootstrap residuals $\widehat \varepsilon^*_j, \ j=1,\ldots,n$.
\item Calculate the value of the test statistic, say $\Delta_{n,W}^*$, corresponding to the bootstrap sample $(Y^*_j,\bb X_j^*)$, $j=1,\dots,n$.
\item Repeat the previous steps a number of times, say $B$, and obtain $\{\Delta_{n,W}^{*(b)}\}_{b=1}^B$.
\item Calculate the critical point of a size-$\alpha$ test as the $(1-\alpha)$ level quantile $c^*_{1-\alpha}$ of $\Delta_{n,W}^{*(b)}$, $b=1,...,B$.
\item Reject the null hypothesis if $\Delta_{n,W}>c^*_{1-\alpha}$, where $\Delta_{n,W}$ is the value of the test statistic based on the original observations $(Y_j,\bb{X}_j)$, $j=1,\dots,n$.
\end{enumerate}

\section{Simulations}
In this section we present the results of a Monte Carlo exercise that sheds light on the small-sample properties of the new test statistic
and compare our test with the classical Kolmogorov--Smirnov (later denoted by KS) and Cram\'er--von Mises (CM) criteria suggested by \citet{neumeyer2016}.
We considered the family of transformations
$$
{\cal{T}}_\vartheta (Y) =
\begin{cases}
~\left\{ (Y + 1)^\vartheta - 1\right\}/ \vartheta 					& \text{if } Y\geq 0,\, \vartheta \ne 0, \\
~\log(Y + 1) 																								 & \text{if } Y\geq 0,\, \vartheta = 0, \\
~-\left\{ (-Y + 1)^{2-\vartheta} - 1\right\}/ (2-\vartheta) & \text{if } Y < 0,\, \vartheta \ne 2, \\
~-\log(-Y + 1)																							 & \text{if } Y < 0,\, \vartheta = 2,
\end{cases}
$$
proposed by \citet{yeo2000}, and randomly generated paired observations $(Y_j,X_j)$, $j=1,\ldots,n$,
from the univariate heteroskedastic model
\begin{equation}\label{MCmodel}
\mathcal{T}_{\vartheta_0} (Y) = m(X) + \sigma(X) \varepsilon,
\end{equation}
where $\vartheta_0=0$, $m(x)=1.5+\exp(x)$ and $\sigma(x)=x$.
Here, $\varepsilon$ is an error term which should be stochastically independent of $X$ under the null hypothesis.
The distribution of the covariate $X$
and the distribution of the error $\varepsilon$ (conditional on $X$) were chosen as one of the following:
\begin{enumerate}[label={\bfseries Model \Alph*.},labelsep=8pt,
									labelindent=.5\parindent,
									itemindent=2\parindent,
									leftmargin=*,
									before=\setlength{\listparindent}{-\leftmargin}]
	\item $X\sim\operatorname{uniform}(0,1)$. Let $\operatorname{ST}(\zeta,\omega,\eta,\nu)$ denote the univariate skew-$t$ distribution with parameters
				$\zeta$ (location), $\omega$ (scale), $\eta$ (shape) and $\nu$ (degrees of freedom) as defined by \citet{azzalini2005}.
				Define
				\[
					(\varepsilon \,\big|\, X=x)
					\stackrel{d}{=}
					\begin{cases}
						 \frac{W_{\eta,\nu}-\Expec(W_{\eta,\nu})}{\sqrt{\Var(W_{\eta,\nu})}} & \text{if } 0 \le x \le 0.5, \\
						Z & \text{if } 0.5 < x \le 1,
					\end{cases}
				\]
				where $W_{\eta,\nu}\sim\operatorname{ST}(0,1,\eta,\nu)$ and $Z\sim\operatorname{N}(0,1)$,
				both quantities independent of $X$.
				Notice that the null hypothesis of a heteroskedastic transformation structure
				is violated except when $\eta \to 0$ and $\nu \to \infty$,
				in which case $W_{\eta,\nu} \to \operatorname{N}(0,1)$ so that
				$\varepsilon$ and $X$ are independent.
	\item $X\sim\operatorname{uniform}(0,1)$. Define
				\[
					(\varepsilon \,\big|\, X=x)
					\stackrel{d}{=}
					\begin{cases}
						\frac{W_\nu-\nu}{\sqrt{2\nu}} & \text{if } 0 \le x \le 0.5, \\
						Z & \text{if } 0.5 < x \le 1,
					\end{cases}
				\]
				where $W_\nu\sim\chi_\nu^2$ and $Z\sim\operatorname{N}(0,1)$.
				Notice that $\varepsilon$ is stochastically dependent on $X$
				except when $\nu\to\infty$, in which case the null hypothesis of a
				heteroskedastic transformation structure is satisfied.
	\item $X\sim\operatorname{uniform}(0,1)$. Let $\operatorname{AL}(\nu,\lambda,\kappa)$ denote the univariate asymmetric Laplace distribution with parameters $\nu$ (location),
				$\lambda$ (scale) and $\kappa$ (shape) as studied by \citet{kozubowski2013}.
				Define
				\[
				(\varepsilon \,\big|\, X=x)
				\stackrel{d}{=}
				\begin{cases}
				\frac{W_\kappa-(1-\kappa^2)/\kappa}{\sqrt{(1+\kappa^4)/\kappa^2}} & \text{if } 0 \le x \le 0.5, \\
				Z & \text{if } 0.5 < x \le 1,
				\end{cases}
				\]
				where $W_\kappa\sim\operatorname{AL}(0,1,\kappa)$ and $Z\sim\operatorname{AL}(0,1,1)$,
				the latter being an observation from the usual symmetric Laplace distribution.
				Notice that $\varepsilon$ is stochastically dependent on $X$
				except when $\kappa=1$, in which case the null hypothesis is satisfied.
	\item To investigate the behavior of the tests also in the case of a discrete covariate, we
				considered generating $X$ from a discrete uniform distribution on the set
				$\{\tfrac{1}{10},\tfrac{2}{10},\cdots,1\}$, with $\varepsilon$ having the
				same distribution as the errors of Model A given above.
\end{enumerate}

For a test size of $\alpha=0.05$, the rejection frequency of the test was recorded
for sample sizes $n=100,200,300$.
The bootstrap resampling scheme of Section~\ref{sec_32} requires a choice of the smoothing parameter $a_n$,
where we followed \citet{neumeyer2016} and chose $a_n=0.5n^{-1/4}$ throughout.
Since the bootstrap replications are time consuming we have employed the warp-speed method of \citet{giacomini2013} in order to calculate critical points of the test criterion. With this method we generate only one bootstrap resample for each Monte Carlo sample and thereby compute the bootstrap test statistic $\Delta_{n,W}^*$ for that resample. Then, for a number $M$ of Monte Carlo replications, the  size-$\alpha$ critical point is determined similarly as in step 7 of Section \ref{sec_32}, by computing the $(1-\alpha)$-level quantile of $\Delta_{n,W}^{*(m)}, \ m=1,...,M$. For all simulations the number of Monte Carlo replications was set to $M=5\,000$.

\subsubsection*{Estimation of the tranformation parameter}
To estimate the transformation parameter $\vartheta_0$ in \eqref{MCmodel} we employ
the profile likelihood estimator recently studied by \citet{neumeyer2016}, which allows for the
heteroskedastic error structure present in our setup.
Implementation of this estimator relies on some practical considerations, which we now discuss.
The estimator involves estimating $m(\cdot)$ and $\sigma(\cdot)$ nonparametrically,
for which we used local linear regression with a Gaussian kernel and bandwidth
chosen by the direct plug-in methodology proposed by \citet{ruppert1995}.
The estimator also requires estimation of the density of the regression errors.
For this purpose we used a Gaussian kernel with bandwidth chosen by the method of \citet{sheather1991}.
Both these methods of bandwidth selection have been implemented by \citet{KernSmooth}
in the \textsf{R} package \texttt{KernSmooth}.
The author warns that in some cases these procedures might be numerically
unstable (see p.~10 of \citealp{KernSmooth}). In these rare situations we turned
to simple rule of thumb selection methods: in the case of nonparametric regression
we used the rule of thumb of \citet{fan1996} (implemented in the package \texttt{locpol} by \citealp{locpol})
and in the case of density estimation a rule of \citet[equations (3.28) and (3.30) on pp.~45 and 47]{silverman1986}.
Finally, to actually implement nonparametric regression and density estimation using these
chosen bandwidths, we employed the package \texttt{np} \citep{np} designed specifically for this purpose.

\subsection{Simulation results for independence}
The simulation results for the three considered models are shown in Tables~\ref{tabA} to \ref{tabD}.
The results for the classical Kolmogorov--Smirnov and Cram\'er--von Mises tests
are given in the columns labelled KS and CM, respectively.
The percentage of rejections of our statistic $\Delta_{n,W}$ is given for
three different choices of the characteristic kernel $\Psi(\cdot)$ discussed Section~\ref{sec_31},
with various choices of a tuning parameter $c>0$. Specifically and for $c>0$, we use as weight functions scaled spherical stable densities that yield $\Psi^{(S)}_\gamma=e^{-c u^\gamma}$ (recall that $\gamma=2$ coincides with the Gaussian case), and scaled spherical Laplace densities that yield  $\Psi^{(S)}_\gamma=(1+(u^2/c))^{-\gamma}$.  
From Tables~\ref{tabA} to \ref{tabD} it is clear that the test based on $\Delta_{n,W}$ respects
the nominal size well. However, for smaller sample sizes the test appears to be slightly
conservative in some cases. The same can be said of the classical KS and CM tests.

Notice that, under the various considered alternatives, the power of all tests increase in accordance
with the nature of the dependence that is introduced between the covariates and the error terms.
Moreover, in agreement with the consistency of the test established formally in Theorem~\ref{thm2},
under alternatives the power of our test appears to increase as the sample size increases.
Overall, in terms of power the new test based on $\Delta_{n,W}$ exhibits competitive
performance and even outperforms the classical tests for most considered choices of the tuning parameter~$c$.

We close by noting that the value of the tuning parameter $c$ clearly has some effect on
the power of the test based on $\Delta_{n,W}$. There exist several interpretations regarding the value of $c$
and for more information on this the reader is referred to the recent review paper by \citet{meintanis2016}.
For all tests we chose values of $c$ for which the tests exhibit good size properties
as well as good power under alternatives.

\begin{table}[tp]
	\centering\scriptsize
	\setlength\tabcolsep{4pt}
	\caption{Size and power results for verifying the validity of Model A. The null hypothesis is satisfied for $\eta=0$ and $\nu=\infty$. The nominal size of the test is $\alpha=0.05$.}\label{tabA}
	 \begin{tabular}{ccccccccccccccccccccccccccc}
		\toprule
		& & & & & \multicolumn{12}{c}{$\Delta_{n,W}$} \\ \cmidrule(lr){6-17}
		& & & & & \multicolumn{4}{c}{$\Psi_2^{(S)}(u)=\exp(-cu^2)$} & \multicolumn{4}{c}{$\Psi_1^{(S)}(u)=\exp(-cu)$} & \multicolumn{4}{c}{$\Psi_1^{(L)}(u)=(1+u^2/c)^{-1}$} \\ \cmidrule(lr){6-9}\cmidrule(lr){10-13}\cmidrule(lr){14-17}
		$\eta$ & $\nu$ & $n$ & KS & CM & {$c=1$} & {$c=1.5$} & {$c=2$} & {$c=4$} & {$c=0.5$} & {$c=1$} & {$c=1.5$} & {$c=2$} & {$c=0.1$} & {$c=0.25$} & {$c=0.5$} & {$c=1$} \\
\midrule
0     & $\infty$ & 100   & 4.7   & 3.9   & 4.5   & 4.4   & 4.4   & 4.2   & 3.8   & 4.0   & 3.7   & 3.6   & 3.9   & 4.2   & 4.3   & 4.3 \\
      &       & 200   & 4.6   & 4.2   & 4.6   & 4.9   & 4.8   & 4.0   & 3.9   & 3.7   & 3.8   & 3.7   & 3.5   & 4.4   & 4.6   & 4.9 \\
      &       & 300   & 5.0   & 4.9   & 5.8   & 5.9   & 6.1   & 5.6   & 4.9   & 5.0   & 4.9   & 4.8   & 5.0   & 5.6   & 5.7   & 5.6 \\\cmidrule(lr){1-17}
0     & 5     & 100   & 7.1   & 9.3   & 10.4  & 9.9   & 9.4   & 8.7   & 8.9   & 8.7   & 8.5   & 8.0   & 7.6   & 8.6   & 9.2   & 10.2 \\
      &       & 200   & 11.4  & 15.5  & 18.3  & 18.7  & 18.6  & 17.3  & 16.5  & 17.7  & 17.0  & 15.9  & 14.4  & 17.5  & 18.1  & 18.4 \\
      &       & 300   & 15.7  & 21.5  & 26.5  & 27.9  & 27.5  & 25.0  & 24.6  & 24.5  & 24.3  & 23.9  & 22.5  & 25.6  & 27.2  & 26.3 \\\cmidrule(lr){1-17}
0     & 2.1   & 100   & 22.3  & 24.5  & 31.2  & 31.4  & 31.2  & 31.2  & 29.4  & 30.3  & 30.5  & 30.2  & 29.4  & 31.5  & 31.8  & 31.6 \\
      &       & 200   & 41.0  & 44.2  & 51.5  & 52.9  & 53.4  & 54.2  & 52.0  & 53.4  & 54.7  & 54.1  & 53.6  & 54.2  & 53.6  & 52.3 \\
      &       & 300   & 56.4  & 60.5  & 69.3  & 70.3  & 71.7  & 71.4  & 69.2  & 70.7  & 71.0  & 71.4  & 70.4  & 71.5  & 71.2  & 70.0 \\\cmidrule(lr){1-17}
100   & 2.1   & 100   & 33.0  & 35.8  & 39.8  & 40.2  & 40.4  & 41.8  & 42.0  & 43.2  & 44.3  & 44.2  & 44.2  & 42.5  & 41.1  & 40.4 \\
      &       & 200   & 51.3  & 52.2  & 57.5  & 58.1  & 58.2  & 58.9  & 59.6  & 60.9  & 61.3  & 61.7  & 61.0  & 59.9  & 58.9  & 57.5 \\
      &       & 300   & 67.1  & 66.8  & 67.1  & 68.5  & 69.6  & 71.8  & 71.1  & 72.0  & 73.3  & 74.7  & 75.0  & 71.9  & 70.2  & 68.2 \\
		\bottomrule
	\end{tabular}
\end{table}

\begin{table}[tp]
	\centering\scriptsize
	\setlength\tabcolsep{4pt}
	\caption{Size and power results for verifying the validity of Model B. The null hypothesis is satisfied for $\nu=\infty$. The nominal size of the test is $\alpha=0.05$.}\label{tabB}
	 \begin{tabular}{ccccccccccccccccccccccccccccccccc}
		\toprule
		& & & & \multicolumn{12}{c}{$\Delta_{n,W}$} \\ \cmidrule(lr){5-16}
		& & & & \multicolumn{4}{c}{$\Psi_2^{(S)}(u)=\exp(-cu^2)$} & \multicolumn{4}{c}{$\Psi_1^{(S)}(u)=\exp(-cu)$} & \multicolumn{4}{c}{$\Psi_1^{(L)}(u)=(1+u^2/c)^{-1}$} \\ \cmidrule(lr){5-8}\cmidrule(lr){9-12}\cmidrule(lr){13-16}
		$\nu$ & $n$ & KS & CM & {$c=1$} & {$c=1.5$} & {$c=2$} & {$c=4$} & {$c=0.5$} & {$c=1$} & {$c=1.5$} & {$c=2$} & {$c=0.1$} & {$c=0.25$} & {$c=0.5$} & {$c=1$} \\
\midrule
$\infty$ & 100   & 5.1   & 3.8   & 5.1   & 4.9   & 4.6   & 4.4   & 4.4   & 4.5   & 4.4   & 4.1   & 4.7   & 5.0   & 5.0   & 4.9 \\
      & 200   & 3.9   & 4.1   & 4.3   & 4.2   & 4.1   & 4.0   & 3.7   & 3.6   & 3.5   & 3.4   & 4.0   & 4.4   & 4.3   & 4.2 \\
      & 300   & 4.7   & 4.7   & 5.9   & 5.5   & 5.3   & 5.1   & 4.8   & 4.8   & 4.5   & 4.6   & 5.3   & 5.5   & 5.7   & 5.8 \\\cmidrule(lr){1-16}
10    & 100   & 5.9   & 7.1   & 8.9   & 8.4   & 7.9   & 6.8   & 7.7   & 7.2   & 6.9   & 6.6   & 8.1   & 8.5   & 9.1   & 9.5 \\
      & 200   & 9.7   & 10.2  & 13.5  & 12.7  & 12.2  & 11.0  & 11.6  & 11.4  & 10.9  & 10.3  & 12.2  & 13.0  & 13.1  & 13.6 \\
      & 300   & 10.0  & 11.0  & 14.4  & 13.4  & 12.6  & 11.4  & 12.9  & 12.9  & 12.4  & 12.0  & 13.2  & 14.0  & 14.3  & 14.7 \\\cmidrule(lr){1-16}
5     & 100   & 9.7   & 10.4  & 13.3  & 12.0  & 11.4  & 11.2  & 12.2  & 12.2  & 11.9  & 11.4  & 11.8  & 12.8  & 13.3  & 14.0 \\
      & 200   & 16.1  & 17.6  & 21.6  & 20.8  & 20.5  & 19.7  & 21.6  & 22.1  & 20.9  & 20.6  & 21.4  & 22.0  & 22.5  & 21.9 \\
      & 300   & 14.7  & 18.1  & 23.7  & 23.0  & 22.5  & 22.1  & 23.6  & 23.4  & 23.0  & 22.9  & 23.1  & 23.6  & 23.9  & 24.1 \\\cmidrule(lr){1-16}
3     & 100   & 14.4  & 17.4  & 22.7  & 21.5  & 21.1  & 18.7  & 21.1  & 21.1  & 20.2  & 19.0  & 21.5  & 22.2  & 22.3  & 22.5 \\
      & 200   & 22.5  & 26.1  & 30.6  & 30.6  & 30.4  & 30.6  & 32.3  & 32.6  & 32.5  & 32.5  & 31.5  & 31.7  & 31.7  & 31.0 \\
      & 300   & 31.6  & 35.4  & 36.6  & 36.6  & 37.3  & 39.2  & 38.8  & 39.5  & 39.6  & 40.4  & 38.0  & 37.1  & 36.7  & 36.5 \\\cmidrule(lr){1-16}
2     & 100   & 21.0  & 25.2  & 30.2  & 29.6  & 28.7  & 28.3  & 29.6  & 30.3  & 30.0  & 29.7  & 29.8  & 29.9  & 29.9  & 29.2 \\
      & 200   & 39.1  & 43.0  & 47.0  & 47.9  & 48.1  & 50.7  & 49.9  & 51.5  & 52.4  & 52.7  & 49.5  & 47.9  & 46.9  & 46.2 \\
      & 300   & 53.0  & 52.2  & 53.8  & 54.9  & 56.3  & 59.4  & 57.6  & 59.3  & 60.4  & 61.3  & 57.4  & 54.6  & 53.7  & 52.6 \\
		\bottomrule
	\end{tabular}
\end{table}

\begin{table}[tp]
	\centering\scriptsize
	\setlength\tabcolsep{4pt}
	\caption{Size and power results for verifying the validity of Model C. The null hypothesis is satisfied for $\kappa=1$. The nominal size of the test is $\alpha=0.05$.}\label{tabC}
	 \begin{tabular}{ccccccccccccccccccccccccccccccccc}
		\toprule
		& & & & \multicolumn{12}{c}{$\Delta_{n,W}$} \\ \cmidrule(lr){5-16}
		& & & & \multicolumn{4}{c}{$\Psi_2^{(S)}(u)=\exp(-cu^2)$} & \multicolumn{4}{c}{$\Psi_1^{(S)}(u)=\exp(-cu)$} & \multicolumn{4}{c}{$\Psi_1^{(L)}(u)=(1+u^2/c)^{-1}$} \\ \cmidrule(lr){5-8}\cmidrule(lr){9-12}\cmidrule(lr){13-16}
		$\kappa$ & $n$ & KS & CM & {$c=1$} & {$c=1.5$} & {$c=2$} & {$c=4$} & {$c=1$} & {$c=1.5$} & {$c=2$} & {$c=4$} & {$c=0.01$} & {$c=0.025$} & {$c=0.05$} & {$c=0.1$} \\
\midrule
1 & 100 & 5.1 & 4.8 & 5.0 & 5.5 & 5.4 & 5.2 & 5.0 & 4.8 & 4.7 & 4.7 & 4.0 & 4.8 & 5.0 & 4.6 \\
& 200 & 5.9 & 6.2 & 5.2 & 5.3 & 5.5 & 5.0 & 5.2 & 4.9 & 4.9 & 4.6 & 4.3 & 4.7 & 4.7 & 4.9 \\
& 300 & 6.5 & 5.5 & 5.8 & 5.8 & 5.4 & 5.5 & 4.9 & 5.2 & 5.2 & 5.3 & 5.4 & 5.6 & 5.0 & 4.9 \\\cmidrule(lr){1-16}
2 & 100 & 8.6 & 9.4 & 13.9 & 14.1 & 13.9 & 13.9 & 13.6 & 14.0 & 13.5 & 11.5 & 10.0 & 11.1 & 11.9 & 13.3 \\
& 200 & 20.7 & 23.8 & 30.2 & 30.5 & 30.7 & 31.9 & 29.9 & 31.4 & 31.1 & 30.8 & 24.9 & 30.1 & 30.8 & 32.0 \\
& 300 & 34.0 & 32.3 & 33.3 & 35.1 & 35.7 & 41.0 & 37.2 & 39.3 & 38.6 & 44.3 & 39.8 & 43.6 & 44.1 & 40.3 \\\cmidrule(lr){1-16}
5 & 100 & 10.7 & 12.1 & 18.3 & 18.6 & 19.2 & 19.8 & 18.6 & 19.2 & 20.2 & 19.6 & 16.1 & 17.6 & 19.4 & 20.5 \\
& 200 & 27.8 & 33.4 & 38.7 & 40.3 & 41.7 & 44.2 & 42.3 & 43.3 & 44.7 & 45.6 & 41.7 & 44.5 & 46.5 & 45.9 \\
& 300 & 41.3 & 41.1 & 43.6 & 45.7 & 47.0 & 57.0 & 53.5 & 56.0 & 59.2 & 66.7 & 63.6 & 65.8 & 66.9 & 64.1 \\
		\bottomrule
	\end{tabular}
\end{table}

\begin{table}[tp]
	\centering\scriptsize
	\setlength\tabcolsep{4pt}
	\caption{Size and power results for verifying the validity of Model D. The null hypothesis is satisfied for $\eta=0$ and $\nu=\infty$. The nominal size of the test is $\alpha=0.05$.}\label{tabD}
	 \begin{tabular}{ccccccccccccccccccccccccccccccccc}
		\toprule
		& & & & & \multicolumn{12}{c}{$\Delta_{n,W}$} \\ \cmidrule(lr){6-17}
		& & & & & \multicolumn{4}{c}{$\Psi_2^{(S)}(u)=\exp(-cu^2)$} & \multicolumn{4}{c}{$\Psi_1^{(S)}(u)=\exp(-cu)$} & \multicolumn{4}{c}{$\Psi_1^{(L)}(u)=(1+u^2/c)^{-1}$} \\ \cmidrule(lr){6-9}\cmidrule(lr){10-13}\cmidrule(lr){14-17}
		$\eta$ & $\nu$ & $n$ & KS & CM & {$c=1$} & {$c=1.5$} & {$c=2$} & {$c=4$} & {$c=0.5$} & {$c=1$} & {$c=1.5$} & {$c=2$} & {$c=0.1$} & {$c=.25$} & {$c=.5$} & {$c=1$} \\
\midrule
0     & $\infty$ & 100   & 3.1   & 2.8   & 3.9   & 3.5   & 3.3   & 3.8   & 3.6   & 3.5   & 3.5   & 3.4   & 3.7   & 3.7   & 3.5   & 3.4 \\
      &       & 200   & 3.7   & 3.5   & 4.4   & 4.6   & 4.3   & 4.0   & 4.2   & 4.5   & 4.7   & 4.2   & 3.9   & 4.1   & 4.4   & 4.5 \\
      &       & 300   & 4.5   & 4.0   & 4.6   & 4.8   & 4.6   & 4.8   & 4.8   & 4.5   & 4.5   & 4.6   & 4.8   & 4.9   & 4.5   & 4.4 \\\cmidrule(lr){1-17}
0     & 5     & 100   & 5.7   & 5.8   & 7.4   & 7.6   & 7.1   & 6.8   & 7.1   & 7.1   & 7.3   & 6.8   & 6.3   & 6.9   & 7.0   & 7.3 \\
      &       & 200   & 7.2   & 8.6   & 9.8   & 10.0  & 10.4  & 10.4  & 10.8  & 11.1  & 11.0  & 10.6  & 10.6  & 10.7  & 11.0  & 10.5 \\
      &       & 300   & 9.7   & 11.2  & 11.1  & 12.1  & 13.2  & 13.3  & 13.1  & 13.7  & 14.0  & 13.3  & 12.9  & 13.6  & 13.7  & 13.8 \\\cmidrule(lr){1-17}
0     & 2.1   & 100   & 18.9  & 19.0  & 22.1  & 23.0  & 25.3  & 26.2  & 25.5  & 25.8  & 26.1  & 27.2  & 25.7  & 26.6  & 26.6  & 25.7 \\
      &       & 200   & 35.3  & 33.3  & 39.4  & 42.7  & 46.7  & 48.6  & 46.6  & 48.8  & 49.9  & 50.9  & 48.3  & 49.7  & 49.8  & 47.8 \\
      &       & 300   & 53.4  & 49.1  & 56.1  & 60.4  & 64.9  & 67.5  & 64.4  & 66.6  & 68.1  & 67.9  & 68.0  & 68.0  & 67.4  & 65.7 \\\cmidrule(lr){1-17}
100   & 2.1   & 100   & 30.1  & 27.7  & 32.1  & 34.5  & 36.0  & 37.7  & 38.2  & 38.2  & 38.3  & 40.1  & 38.8  & 39.4  & 38.4  & 36.6 \\
      &       & 200   & 53.9  & 53.1  & 54.2  & 57.5  & 61.5  & 63.9  & 60.3  & 62.3  & 63.7  & 65.8  & 66.8  & 65.6  & 63.5  & 61.6 \\
      &       & 300   & 69.1  & 65.2  & 64.8  & 68.2  & 72.6  & 75.8  & 70.8  & 72.1  & 73.5  & 76.8  & 77.6  & 76.5  & 74.3  & 72.2 \\
		\bottomrule
	\end{tabular}
\end{table}

\subsection{Simulation results for normality and symmetry}
One of the main goals of transformation is to reduce skewness and possibly even achieve near normality. These issues have been recently investigated by \citet{yeo2000}, \citet{yeo2014}, \citet{meintanis2015}, and \citet{chen2002}, with or without regressors, with the last reference also providing asymptotics for a test of normality of the after Box--Cox transformation errors under homoskedasticity. In this section we investigate how the CF tests for symmetry and normality designed for i.i.d.\ data perform within the significantly more complicated context of the semiparametric heteroskedastic transformation model \eqref{modelI}. In this connection we note that such CF tests have already shown competitive performance in more classical regression frameworks; see \citet{huskova2010,huskova2012}. The CF test statistics of normality and symmetry are motivated by uniqueness of the CF of any given distribution, and by the fact that for any zero-symmetric distribution, the imaginary part of its CF is identically equal to zero. Thus we have the test statistic for normality 
\begin{equation}\label{test.gauss}
\Delta^{(G)}_{n,w}= n \int_{-\infty}^\infty |\widehat \varphi_{\widehat \varepsilon}(t)-e^{-t^2/2}|^2 w(t){\rm{d}}t,
\end{equation}
and the test statistic for symmetry of errors
\begin{equation}\label{test.symm}
\Delta^{(S)}_{n,w}= n \int_{-\infty}^\infty \left({\texttt{Im}} (\widehat \varphi_{\widehat \varepsilon}(t)\right)^2 w(t){\rm{d}}t,
\end{equation}
where $w(\cdot)$ is a weight function analogous to $w_1(\cdot)$ of assumption (A.8) and ${\texttt{Im}}(z)$ denotes the imaginary part of a complex number $z$.

In our Monte Carlo simulations, the results of which are shown in Tables~\ref{tabA2} to \ref{tab.symm.D},
we use three choices of the weight function $w(\cdot)$ and various choices of the tuning parameter $c$ for both tests.
To obtain the critical value of the \emph{normality} test corresponding to the statistic in \eqref{test.gauss}, we used the
same bootstrap resampling scheme as given in Section~\ref{sec_32}, but with step \ref{itm:bootstraperrors} replaced by:
\begin{enumerate}[start=2]
	\item[$2.\hspace{-2pt}'$] Generate i.i.d.\ errors $\{\varepsilon^*_j\}_{j=1}^n$ from a standard normal distribution.
\end{enumerate}
The critical value of test for \emph{symmetry} based on the statistic in \eqref{test.symm} was obtained using a wild bootstrap scheme, see \citet{NZ2000}, \citet{delgado2001} and \cite{huskova2012}, 
which is the same as that given in Section~\ref{sec_32} but with step \ref{itm:bootstraperrors} replaced by:
\begin{enumerate}[start=2]
	\item[$2.\hspace{-2pt}''$] Generate i.i.d.\ random variables $\{U_j\}_{j=1}^n$ according to the law $\Prob(U_j=+1)=\Prob(U_j=-1)=\tfrac{1}{2}$
				and set $\varepsilon^*_j=U_j\tilde \varepsilon_j$, $j=1,...,n$, where the $\tilde\varepsilon_j$ are drawn randomly with replacement
				from $\widehat\varepsilon_1,\ldots,\widehat\varepsilon_n$.
\end{enumerate}

Firstly, concerning the test for normality, we see from Tables~\ref{tabA2} to \ref{tabD2} 
that the size of the test is approximately around the nominal size, being slightly
conservative in some cases. The power of the test increases with the extend of violation of normality,
i.e., as the skewness parameter $\eta$ is increased or as the degrees of freedom parameter $\nu$ is decreased.
Finally, the results seem to suggest consistency of the test in the sense that, for each given fixed alternative, the
power increases gradually as the sample size is increased. These observations hold for error terms
generated under Models A, B and D. For Model C, we just note that an analogous CF-based test
can be constructed along the lines of \citet{meintanis2004}.

Regarding the test for symmetry similar conclusions as above can be made
(see Tables~\ref{tab.symm.A} to \ref{tab.symm.D}).
Note however that for Model A there is a clear over-rejection of the null hypothesis
of symmetry in the cases where $\eta=0$ and $\nu=2.1$ or $5$, i.e.\ when
the error distribution is symmetric but heavy-tailed.
As seen in Table~\ref{tab.symm.C} this is also true for Model C in the case where $\kappa=1$, although to a lesser extent.
To address this issue we employed the permutation test suggested by
\citet{henze2003} developed specifically to address the issue of over-rejection.
The results obtained in this way, however, agree almost exactly with the
results in Tables~\ref{tab.symm.A} obtained using the wild bootstrap approach.
It should be noted that this issue of over-rejection does not occur when
the true transformation parameter $\vartheta_0$ is assumed to be known and
only arises in the more complicated setting where $\vartheta_0$ needs to be estimated.

In conclusion we note that the test for normality and the test for
symmetry both exhibit favourable properties even in this more complicated setting
of the heteroskedastic transformation model. However, our results are just indicative
of the performance of existing tests in this setting, and a more in depth study
is needed to explore the theoretical properties of these tests, which
might shed more light on some of the prevailing issues mentioned above.

\begin{table}[tp]
	\centering\scriptsize
	\setlength\tabcolsep{4pt}
	\caption{Size and power results for assessing normality ($\Delta^{(G)}_{n,w}$) of the error terms appearing in Model A. The nominal size of the test is $\alpha=0.05$.}\label{tabA2}
	 \begin{tabular}{cccccccccccccccccccccccccccccccc}
		\toprule
		& & & \multicolumn{4}{c}{$w(t)=\exp({-ct^2})$} & \multicolumn{4}{c}{$w(t)=({1+t^2/c^2})^{-1}$} & \multicolumn{4}{c}{$w(t)=\exp({-c|t|})$} \\ \cmidrule(lr){4-7}\cmidrule(lr){8-11}\cmidrule(lr){12-15}
		$\eta$ & $\nu$ & $n$ & {$c=0.1$} & {$c=0.25$} & {$c=0.5$} & {$c=1$} & {$c=0.5$} & {$c=1$} & {$c=1.5$} & {$c=2$} & {$c=0.05$} & {$c=0.1$} & {$c=0.25$} & {$c=0.5$} \\
\midrule
		0     & $\infty$ & 100   & 3.7   & 4.0   & 4.2   & 4.4   & 2.5   & 2.6   & 2.6   & 2.5   & 2.7   & 2.5   & 2.4   & 2.4 \\
		&       & 200   & 3.7   & 3.8   & 3.8   & 4.0   & 3.1   & 3.0   & 2.9   & 2.9   & 2.9   & 3.0   & 2.9   & 2.9 \\
		&       & 300   & 3.9   & 4.0   & 4.1   & 4.6   & 2.8   & 2.9   & 3.0   & 2.9   & 3.3   & 3.4   & 2.9   & 2.9 \\\cmidrule(lr){1-15}
		5     & $\infty$ & 100   & 15.2  & 15.5  & 15.2  & 13.9  & 13.5  & 14.5  & 14.6  & 15.2  & 14.7  & 15.8  & 16.0  & 14.7 \\
		&       & 200   & 30.4  & 30.8  & 30.9  & 31.3  & 22.7  & 24.8  & 25.9  & 26.3  & 27.2  & 27.4  & 27.2  & 26.4 \\
		&       & 300   & 41.8  & 41.5  & 41.9  & 40.7  & 30.7  & 33.5  & 35.0  & 36.3  & 39.4  & 39.7  & 38.2  & 36.6 \\\cmidrule(lr){1-15}
		20    & $\infty$ & 100   & 18.5  & 19.8  & 19.5  & 18.8  & 14.9  & 15.9  & 17.1  & 17.4  & 17.5  & 18.3  & 17.8  & 17.3 \\
		&       & 200   & 43.0  & 42.2  & 41.4  & 41.4  & 27.3  & 30.0  & 31.4  & 32.4  & 36.3  & 36.5  & 34.8  & 32.0 \\
		&       & 300   & 55.1  & 54.1  & 52.1  & 50.2  & 37.4  & 41.3  & 43.2  & 44.4  & 49.7  & 48.6  & 46.9  & 44.3 \\\cmidrule(lr){1-15}
		0     & 5     & 100   & 5.5   & 6.0   & 6.4   & 6.8   & 7.2   & 7.3   & 6.9   & 6.9   & 6.9   & 7.0   & 6.9   & 7.2 \\
		&       & 200   & 6.3   & 6.7   & 7.1   & 7.8   & 9.4   & 10.0  & 10.2  & 10.2  & 10.7  & 10.1  & 10.2  & 10.0 \\
		&       & 300   & 6.1   & 6.5   & 6.8   & 7.5   & 13.7  & 14.7  & 14.7  & 14.9  & 16.2  & 15.7  & 15.8  & 14.9 \\\cmidrule(lr){1-15}
		0     & 2.1   & 100   & 14.0  & 14.7  & 14.6  & 14.1  & 17.9  & 19.0  & 19.8  & 20.4  & 20.7  & 21.3  & 20.7  & 20.0 \\
		&       & 200   & 27.3  & 25.4  & 24.3  & 23.4  & 30.2  & 31.8  & 33.5  & 34.3  & 37.5  & 36.8  & 35.6  & 34.0 \\
		&       & 300   & 29.0  & 27.7  & 26.2  & 26.2  & 37.6  & 42.0  & 43.4  & 44.1  & 46.1  & 45.8  & 44.8  & 44.1 \\\cmidrule(lr){1-15}
		100   & 2.1   & 100   & 36.1  & 34.1  & 32.6  & 29.9  & 28.5  & 32.3  & 34.8  & 36.5  & 41.9  & 41.9  & 38.8  & 35.7 \\
		&       & 200   & 57.6  & 53.9  & 51.7  & 50.5  & 48.7  & 52.8  & 55.5  & 57.2  & 62.1  & 61.9  & 59.1  & 56.7 \\
		&       & 300   & 58.1  & 53.6  & 52.9  & 53.1  & 59.8  & 64.7  & 66.2  & 67.9  & 72.9  & 72.2  & 70.6  & 67.4 \\
		\bottomrule
	\end{tabular}
\end{table}

\begin{table}[tp]
	\centering\scriptsize
	\setlength\tabcolsep{4pt}
	\caption{Size and power results for assessing normality ($\Delta^{(G)}_{n,w}$) of the error terms appearing in Model B. The nominal size of the test is $\alpha=0.05$.}\label{tabB2}
	 \begin{tabular}{ccccccccccccccccccccccccccccc}
		\toprule
		& & \multicolumn{4}{c}{$w(t)=\exp({-ct^2})$} & \multicolumn{4}{c}{$w(t)=({1+t^2/c^2})^{-1}$} & \multicolumn{4}{c}{$w(t)=\exp({-c|t|})$} \\ \cmidrule(lr){3-6}\cmidrule(lr){7-10}\cmidrule(lr){11-14}
		$\nu$ & $n$ & {$c=0.1$} & {$c=0.25$} & {$c=0.5$} & {$c=1$} & {$c=0.5$} & {$c=1$} & {$c=1.5$} & {$c=2$} & {$c=0.05$} & {$c=0.1$} & {$c=0.25$} & {$c=0.5$} \\
		\midrule
	$\infty$ & 100   & 3.5   & 3.4   & 3.7   & 3.7   & 2.5   & 2.4   & 2.6   & 2.5   & 2.6   & 2.4   & 2.5   & 2.5 \\
	& 200   & 4.0   & 4.1   & 4.4   & 4.7   & 3.8   & 3.5   & 3.7   & 3.7   & 3.9   & 3.9   & 3.7   & 3.7 \\
	& 300   & 4.2   & 4.3   & 4.6   & 4.8   & 3.3   & 3.2   & 3.2   & 3.3   & 3.6   & 3.4   & 3.5   & 3.3 \\\cmidrule(lr){1-14}
	10    & 100   & 12.6  & 14.0  & 14.5  & 14.2  & 12.3  & 12.8  & 12.9  & 13.0  & 12.4  & 12.5  & 13.1  & 12.9 \\
	& 200   & 25.0  & 26.1  & 25.9  & 25.6  & 23.0  & 24.3  & 24.9  & 25.5  & 25.6  & 25.8  & 26.0  & 25.1 \\
	& 300   & 32.8  & 33.2  & 33.0  & 32.6  & 26.6  & 29.5  & 30.8  & 31.6  & 31.9  & 32.4  & 32.6  & 31.1 \\\cmidrule(lr){1-14}
	5     & 100   & 17.7  & 19.3  & 19.6  & 19.5  & 17.6  & 18.4  & 18.8  & 19.4  & 19.8  & 20.4  & 20.5  & 19.0 \\
	& 200   & 38.6  & 38.4  & 38.9  & 37.6  & 31.2  & 33.5  & 34.6  & 34.9  & 35.6  & 35.8  & 35.7  & 35.2 \\
	& 300   & 54.8  & 55.2  & 54.1  & 52.8  & 42.2  & 46.3  & 48.6  & 49.9  & 52.2  & 52.4  & 51.1  & 48.8 \\\cmidrule(lr){1-14}
	3     & 100   & 25.4  & 26.4  & 25.9  & 25.0  & 19.9  & 21.9  & 22.8  & 23.7  & 23.6  & 24.9  & 24.6  & 24.0 \\
	& 200   & 51.9  & 50.6  & 50.4  & 50.5  & 38.7  & 42.6  & 44.4  & 45.7  & 49.7  & 49.5  & 48.4  & 45.8 \\
	& 300   & 64.0  & 62.7  & 62.2  & 61.1  & 51.6  & 54.9  & 57.0  & 58.7  & 62.8  & 62.6  & 60.9  & 57.8 \\\cmidrule(lr){1-14}
	2     & 100   & 34.7  & 33.5  & 31.8  & 30.3  & 25.4  & 28.4  & 30.0  & 31.4  & 35.3  & 35.4  & 33.9  & 31.1 \\
	& 200   & 55.6  & 52.5  & 51.3  & 50.1  & 41.8  & 45.1  & 47.7  & 49.8  & 56.0  & 55.3  & 52.9  & 49.1 \\
	& 300   & 68.3  & 65.9  & 63.6  & 62.2  & 56.0  & 60.5  & 62.9  & 64.9  & 74.5  & 72.9  & 68.7  & 64.0 \\
		\bottomrule
	\end{tabular}
\end{table}

\begin{table}[tp]
	\centering\scriptsize
	\setlength\tabcolsep{4pt}
	\caption{Size and power results for assessing normality ($\Delta^{(G)}_{n,w}$) of the error terms appearing in Model D. The nominal size of the test is $\alpha=0.05$.}\label{tabD2}
	 \begin{tabular}{cccccccccccccccccccccccccccccccc}
		\toprule
		& & & \multicolumn{4}{c}{$w(t)=\exp({-ct^2})$} & \multicolumn{4}{c}{$w(t)=({1+t^2/c^2})^{-1}$} & \multicolumn{4}{c}{$w(t)=\exp({-c|t|})$} \\ \cmidrule(lr){4-7}\cmidrule(lr){8-11}\cmidrule(lr){12-15}
		$\eta$ & $\nu$ & $n$ & {$c=0.1$} & {$c=0.25$} & {$c=0.5$} & {$c=1$} & {$c=0.5$} & {$c=1$} & {$c=1.5$} & {$c=2$} & {$c=0.05$} & {$c=0.1$} & {$c=0.25$} & {$c=0.5$} \\
\midrule
0     & $\infty$ & 100   & 3.2   & 3.2   & 3.0   & 3.1   & 1.7   & 1.9   & 1.9   & 2.0   & 2.1   & 1.7   & 1.9   & 2.6 \\
      &       & 200   & 2.9   & 2.8   & 2.8   & 3.0   & 1.8   & 2.0   & 2.1   & 2.1   & 2.1   & 1.9   & 1.9   & 2.3 \\
      &       & 300   & 4.2   & 3.9   & 3.8   & 3.8   & 2.2   & 2.3   & 2.4   & 2.6   & 2.4   & 2.4   & 2.3   & 2.2 \\\cmidrule(lr){1-15}
0     & 5     & 100   & 5.8   & 6.6   & 6.5   & 6.7   & 5.4   & 5.4   & 5.3   & 5.5   & 5.4   & 5.4   & 5.9   & 6.0 \\
      &       & 200   & 5.8   & 6.3   & 6.5   & 7.0   & 6.3   & 6.8   & 6.7   & 6.7   & 6.8   & 6.8   & 6.4   & 6.7 \\
      &       & 300   & 6.9   & 7.7   & 8.6   & 9.2   & 8.8   & 8.6   & 8.9   & 9.1   & 9.2   & 8.4   & 9.0   & 9.7 \\\cmidrule(lr){1-15}
0     & 2.1   & 100   & 16.3  & 16.2  & 15.6  & 14.7  & 14.0  & 14.9  & 15.7  & 16.0  & 15.5  & 15.4  & 13.0  & 10.5 \\
      &       & 200   & 26.6  & 25.8  & 25.7  & 24.1  & 21.8  & 24.5  & 25.7  & 26.9  & 26.3  & 24.1  & 21.4  & 18.2 \\
      &       & 300   & 33.9  & 30.5  & 27.6  & 25.7  & 22.2  & 26.0  & 28.0  & 29.5  & 29.4  & 25.6  & 22.7  & 19.5 \\\cmidrule(lr){1-15}
100   & 2.1   & 100   & 39.3  & 37.4  & 35.7  & 33.2  & 23.3  & 28.0  & 29.9  & 31.0  & 31.2  & 25.7  & 21.4  & 17.7 \\
      &       & 200   & 64.3  & 59.9  & 56.1  & 52.4  & 40.0  & 47.2  & 51.2  & 53.8  & 52.7  & 44.6  & 36.6  & 30.8 \\
      &       & 300   & 77.9  & 72.3  & 67.4  & 62.1  & 51.6  & 59.9  & 63.9  & 66.9  & 65.5  & 56.3  & 48.9  & 40.6 \\
		\bottomrule
	\end{tabular}
\end{table}

\begin{table}[tp]
	\centering\scriptsize
	\setlength\tabcolsep{4pt}
	\caption{Size and power results for assessing symmetry ($\Delta^{(S)}_{n,w}$) of the error terms appearing in Model A. The nominal size of the test is $\alpha=0.05$.}\label{tab.symm.A}
	 \begin{tabular}{cccccccccccccccccccccccccccccccc}
		\toprule
		& & & \multicolumn{4}{c}{$w(t)=\exp({-ct^2})$} & \multicolumn{4}{c}{$w(t)=({1+t^2/c^2})^{-1}$} & \multicolumn{4}{c}{$w(t)=\exp({-c|t|})$} \\ \cmidrule(lr){4-7}\cmidrule(lr){8-11}\cmidrule(lr){12-15}
		$\eta$ & $\nu$ & $n$ & {$c=0.25$} & {$c=0.5$} & {$c=1$} & {$c=1.5$} & {$c=0.5$} & {$c=1$} & {$c=1.5$} & {$c=2$} & {$c=0.5$} & {$c=1$} & {$c=1.5$} & {$c=2$} \\
			\midrule
			0     & $\infty$ & 100   & 6.0   & 5.8   & 6.5   & 6.4   & 6.0   & 5.9   & 5.7   & 5.4   & 5.9   & 6.3   & 6.6   & 6.7 \\
			&       & 200   & 4.4   & 4.9   & 5.3   & 5.2   & 6.0   & 5.9   & 5.9   & 5.9   & 5.9   & 5.0   & 4.6   & 4.8 \\
			&       & 300   & 5.0   & 5.4   & 5.2   & 5.1   & 6.9   & 6.8   & 6.7   & 6.7   & 6.8   & 5.0   & 5.3   & 5.2 \\\cmidrule(lr){1-15}
			5     & $\infty$ & 100   & 23.0  & 23.5  & 23.5  & 22.4  & 22.0  & 21.2  & 20.8  & 20.1  & 20.8  & 22.6  & 23.4  & 24.0 \\
			&       & 200   & 38.2  & 39.5  & 39.1  & 37.7  & 35.5  & 34.9  & 34.1  & 32.8  & 33.9  & 37.1  & 39.6  & 39.2 \\
			&       & 300   & 48.8  & 50.1  & 49.1  & 47.2  & 46.6  & 45.9  & 44.8  & 44.0  & 45.5  & 48.8  & 49.5  & 49.5 \\\cmidrule(lr){1-15}
			20    & $\infty$ & 100   & 27.4  & 29.5  & 28.7  & 27.3  & 26.3  & 25.9  & 24.8  & 24.0  & 24.8  & 27.2  & 28.5  & 28.8 \\
			&       & 200   & 47.7  & 47.9  & 46.4  & 44.9  & 43.4  & 43.1  & 42.6  & 42.6  & 43.4  & 45.9  & 47.6  & 47.2 \\
			&       & 300   & 59.9  & 59.7  & 58.0  & 56.5  & 59.3  & 59.0  & 58.3  & 57.7  & 58.1  & 60.9  & 59.6  & 58.8 \\\cmidrule(lr){1-15}
			0     & 5     & 100   & 8.0   & 7.4   & 9.1   & 9.1   & 8.0   & 7.9   & 7.6   & 7.4   & 7.7   & 8.2   & 8.5   & 8.4 \\
			&       & 200   & 7.6   & 8.1   & 8.6   & 8.6   & 8.1   & 7.9   & 7.8   & 7.7   & 7.7   & 8.1   & 8.1   & 8.0 \\
			&       & 300   & 6.7   & 7.3   & 8.0   & 8.1   & 7.3   & 6.9   & 7.0   & 6.9   & 7.0   & 6.5   & 6.9   & 7.3 \\\cmidrule(lr){1-15}
			0     & 2.1   & 100   & 9.3   & 9.8   & 10.2  & 9.9   & 8.6   & 8.0   & 8.0   & 7.6   & 7.9   & 9.1   & 9.7   & 10.0 \\
			&       & 200   & 11.6  & 12.8  & 12.1  & 11.6  & 10.6  & 10.3  & 10.3  & 9.9   & 10.3  & 12.0  & 12.3  & 12.5 \\
			&       & 300   & 14.1  & 15.1  & 14.6  & 13.5  & 12.2  & 11.9  & 11.8  & 11.5  & 11.7  & 13.4  & 14.2  & 14.6 \\\cmidrule(lr){1-15}
			100   & 2.1   & 100   & 39.0  & 40.2  & 37.9  & 34.0  & 38.4  & 38.4  & 38.6  & 38.7  & 38.9  & 38.2  & 38.7  & 38.5 \\
			&       & 200   & 52.7  & 49.6  & 47.7  & 46.5  & 51.8  & 53.3  & 54.3  & 55.2  & 55.0  & 52.2  & 50.9  & 49.1 \\
			&       & 300   & 53.2  & 49.6  & 47.3  & 45.5  & 54.5  & 56.1  & 58.1  & 59.6  & 58.9  & 54.2  & 51.2  & 48.9 \\
		\bottomrule
	\end{tabular}
\end{table}

\begin{table}[tp]
	\centering\scriptsize
	\setlength\tabcolsep{4pt}
	\caption{Size and power results for assessing symmetry ($\Delta^{(S)}_{n,w}$) of the error terms appearing in Model B. The nominal size of the test is $\alpha=0.05$.}\label{tab.symm.B}
	 \begin{tabular}{cccccccccccccccccccccccccccccccc}
		\toprule
		& & \multicolumn{4}{c}{$w(t)=\exp({-ct^2})$} & \multicolumn{4}{c}{$w(t)=({1+t^2/c^2})^{-1}$} & \multicolumn{4}{c}{$w(t)=\exp({-c|t|})$} \\ \cmidrule(lr){3-6}\cmidrule(lr){7-10}\cmidrule(lr){11-14}
		$\nu$ & $n$ & {$c=0.25$} & {$c=0.5$} & {$c=1$} & {$c=1.5$} & {$c=0.5$} & {$c=1$} & {$c=1.5$} & {$c=2$} & {$c=0.5$} & {$c=1$} & {$c=1.5$} & {$c=2$} \\
\midrule
	$\infty$ & 100   & 6.2   & 6.0   & 6.5   & 6.0   & 5.3   & 5.4   & 5.4   & 5.4   & 5.5   & 6.1   & 6.2   & 6.4 \\
	& 200   & 5.3   & 5.9   & 6.0   & 5.9   & 6.3   & 6.3   & 6.1   & 6.0   & 6.2   & 6.5   & 5.6   & 5.7 \\
	& 300   & 5.2   & 5.9   & 5.9   & 5.9   & 6.7   & 6.8   & 6.9   & 6.8   & 6.9   & 5.2   & 5.7   & 5.9 \\\cmidrule(lr){1-14}
	10    & 100   & 16.8  & 19.1  & 19.1  & 18.0  & 15.6  & 15.3  & 15.0  & 14.5  & 14.8  & 16.2  & 18.0  & 18.4 \\
	& 200   & 28.7  & 29.9  & 30.4  & 29.9  & 26.4  & 26.5  & 25.9  & 25.2  & 25.9  & 29.2  & 28.9  & 30.2 \\
	& 300   & 42.2  & 43.3  & 43.4  & 42.8  & 38.0  & 37.5  & 36.2  & 35.5  & 36.6  & 41.6  & 42.6  & 43.3 \\\cmidrule(lr){1-14}
	5     & 100   & 26.9  & 28.5  & 28.7  & 26.5  & 26.0  & 25.1  & 24.1  & 23.2  & 24.4  & 26.2  & 27.4  & 27.8 \\
	& 200   & 45.6  & 46.0  & 45.3  & 44.2  & 43.3  & 42.7  & 41.9  & 41.4  & 42.4  & 44.2  & 45.6  & 45.6 \\
	& 300   & 56.6  & 56.0  & 54.9  & 53.3  & 55.1  & 55.1  & 54.3  & 53.6  & 54.3  & 56.7  & 56.0  & 55.4 \\\cmidrule(lr){1-14}
	3     & 100   & 36.5  & 37.5  & 35.9  & 33.0  & 34.9  & 34.8  & 34.2  & 34.0  & 35.0  & 35.8  & 36.3  & 35.8 \\
	& 200   & 55.9  & 55.8  & 54.5  & 53.0  & 55.4  & 55.5  & 55.8  & 55.5  & 55.9  & 55.8  & 55.7  & 55.2 \\
	& 300   & 66.1  & 64.6  & 62.4  & 60.1  & 63.9  & 64.1  & 63.7  & 63.5  & 63.8  & 65.7  & 64.9  & 63.5 \\\cmidrule(lr){1-14}
	2     & 100   & 41.0  & 41.1  & 39.9  & 35.7  & 38.5  & 38.3  & 37.8  & 37.5  & 37.5  & 39.1  & 40.9  & 40.4 \\
	& 200   & 60.3  & 58.6  & 55.9  & 53.4  & 58.4  & 59.2  & 59.3  & 59.4  & 59.8  & 60.1  & 59.3  & 57.9 \\
	& 300   & 66.9  & 64.6  & 61.1  & 59.4  & 66.7  & 67.0  & 67.5  & 68.4  & 68.3  & 67.2  & 65.1  & 63.8 \\
		\bottomrule
	\end{tabular}
\end{table}

\begin{table}[tp]
	\centering\scriptsize
	\setlength\tabcolsep{4pt}
	\caption{Size and power results for assessing symmetry ($\Delta^{(S)}_{n,w}$) of the error terms appearing in Model C. The nominal size of the test is $\alpha=0.05$.}\label{tab.symm.C}
	 \begin{tabular}{cccccccccccccccccccccccccccccccc}
		\toprule
		& & \multicolumn{4}{c}{$w(t)=\exp({-ct^2})$} & \multicolumn{4}{c}{$w(t)=({1+t^2/c^2})^{-1}$} & \multicolumn{4}{c}{$w(t)=\exp({-c|t|})$} \\ \cmidrule(lr){3-6}\cmidrule(lr){7-10}\cmidrule(lr){11-14}
		$\kappa$ & $n$ & {$c=0.25$} & {$c=0.5$} & {$c=1$} & {$c=1.5$} & {$c=0.5$} & {$c=1$} & {$c=1.5$} & {$c=2$} & {$c=0.5$} & {$c=1$} & {$c=1.5$} & {$c=2$} \\
\midrule
1 & 100 & 7.2 & 7.6 & 8.7 & 8.8 & 8.8 & 8.8 & 8.5 & 8.4 & 8.6 & 9.2 & 7.5 & 8.1 \\
& 200 & 7.2 & 7.6 & 7.7 & 8.8 & 8.3 & 8.0 & 7.9 & 7.7 & 8.2 & 7.3 & 7.5 & 7.5 \\
& 300 & 5.6 & 6.9 & 7.6 & 7.9 & 7.6 & 7.7 & 7.7 & 7.4 & 7.5 & 5.4 & 6.4 & 7.7 \\\cmidrule(lr){1-14}
2 & 100 & 25.8 & 25.8 & 24.7 & 22.8 & 23.0 & 23.2 & 22.4 & 21.8 & 22.7 & 24.4 & 25.5 & 25.4 \\
& 200 & 37.1 & 35.7 & 34.8 & 32.9 & 33.7 & 33.9 & 33.8 & 34.0 & 35.1 & 35.5 & 36.0 & 35.6 \\
& 300 & 49.8 & 46.7 & 45.5 & 42.1 & 48.8 & 49.7 & 49.0 & 49.3 & 51.3 & 50.3 & 48.1 & 47.0 \\\cmidrule(lr){1-14}
5 & 100 & 29.8 & 30.3 & 29.6 & 27.1 & 27.2 & 27.4 & 27.1 & 26.9 & 27.4 & 29.2 & 29.5 & 29.5 \\
& 200 & 45.8 & 44.6 & 42.9 & 41.9 & 43.6 & 43.5 & 43.8 & 43.9 & 44.1 & 45.1 & 45.0 & 44.2 \\
& 300 & 54.2 & 53.3 & 51.2 & 48.6 & 48.2 & 47.9 & 49.5 & 50.4 & 50.4 & 54.6 & 53.7 & 53.5 \\
		\bottomrule
	\end{tabular}
\end{table}

\begin{table}[tp]
	\centering\scriptsize
	\setlength\tabcolsep{4pt}
	\caption{Size and power results for assessing symmetry ($\Delta^{(S)}_{n,w}$) of the error terms appearing in Model D. The nominal size of the test is $\alpha=0.05$.}\label{tab.symm.D}
	 \begin{tabular}{cccccccccccccccccccccccccccccccc}
		\toprule
		& & & \multicolumn{4}{c}{$w(t)=\exp({-ct^2})$} & \multicolumn{4}{c}{$w(t)=({1+t^2/c^2})^{-1}$} & \multicolumn{4}{c}{$w(t)=\exp({-c|t|})$} \\ \cmidrule(lr){4-7}\cmidrule(lr){8-11}\cmidrule(lr){12-15}
		$\eta$ & $\nu$ & $n$ & {$c=0.25$} & {$c=0.5$} & {$c=1$} & {$c=1.5$} & {$c=0.5$} & {$c=1$} & {$c=1.5$} & {$c=2$} & {$c=0.5$} & {$c=1$} & {$c=1.5$} & {$c=2$} \\
			\midrule
0     & $\infty$ & 100   & 3.6   & 3.8   & 4.1   & 4.1   & 3.9   & 3.6   & 3.6   & 3.3   & 3.4   & 3.7   & 3.8   & 3.9 \\
      &       & 200   & 4.4   & 4.4   & 4.4   & 4.0   & 4.1   & 4.2   & 4.1   & 4.0   & 4.1   & 4.4   & 4.4   & 4.1 \\
      &       & 300   & 5.0   & 5.0   & 4.8   & 4.8   & 4.8   & 4.8   & 4.8   & 4.8   & 4.8   & 5.0   & 5.0   & 4.8 \\\cmidrule(lr){1-15}
0     & 5     & 100   & 5.9   & 6.4   & 6.9   & 6.5   & 5.8   & 5.4   & 5.2   & 5.1   & 5.1   & 5.8   & 6.4   & 6.7 \\
      &       & 200   & 5.9   & 6.1   & 5.9   & 5.7   & 5.7   & 5.6   & 5.6   & 5.5   & 5.6   & 5.8   & 6.0   & 6.1 \\
      &       & 300   & 5.8   & 6.0   & 5.9   & 5.6   & 5.3   & 5.3   & 5.2   & 5.0   & 5.2   & 5.6   & 5.7   & 5.9 \\\cmidrule(lr){1-15}
0     & 2.1   & 100   & 8.2   & 8.1   & 7.7   & 7.7   & 7.8   & 7.7   & 7.6   & 7.5   & 7.7   & 8.0   & 8.1   & 7.9 \\
      &       & 200   & 8.0   & 8.0   & 7.3   & 7.1   & 7.3   & 7.2   & 7.1   & 6.7   & 7.0   & 7.9   & 7.9   & 7.6 \\
      &       & 300   & 8.8   & 9.0   & 8.6   & 7.8   & 7.7   & 8.0   & 7.8   & 7.7   & 8.0   & 8.1   & 8.4   & 8.5 \\\cmidrule(lr){1-15}
100   & 2.1   & 100   & 40.3  & 39.0  & 36.9  & 35.0  & 39.6  & 40.1  & 40.1  & 41.3  & 41.0  & 39.8  & 39.1  & 38.7 \\
      &       & 200   & 53.4  & 50.4  & 47.2  & 45.4  & 54.7  & 56.9  & 59.1  & 60.1  & 59.8  & 53.8  & 51.0  & 49.7 \\
      &       & 300   & 56.6  & 52.7  & 49.0  & 46.7  & 58.6  & 61.2  & 63.7  & 65.3  & 64.9  & 57.4  & 53.8  & 51.4 \\
		\bottomrule
	\end{tabular}
\end{table}								

\section{Illustrative applications}\label{sec:applications}
For our first application of the described procedures we consider the ultrasonic
calibration data given in \emph{NIST/SEMATECH e-Handbook of Statistical Methods}
(the data can be downloaded from \url{http://www.itl.nist.gov/div898/handbook/pmd/section6/pmd631.htm}).
The response variable $Y$ represents ultrasonic response and the predictor variable $X$ is metal distance.

We investigate the appropriateness of four alternative models:
a homoskedastic model with or without transformation of the
response variable and a heteroskedastic model with or without transformation of
the response variable. For each of these models we test for validity, i.e. independence of the error term and the regressor. We employ all tests considered in this paper
(and their homoskedastic counterparts introduced by
\citealp{neumeyer2016}, and \citealp{huskova2018}),
and for all tests based on the characteristic function
we choose a Gaussian characteristic kernel.
The choice of the tuning parameter was based on the Monte Carlo study
and is shown in Table~\ref{tab:application.ultrasonic2} along with the numerical results. For this application we used 1\,000 bootstrap replications and assume a significance level of 0.05. For simplicity we used Fan and Gijbels (1996) for regression bandwidths and  \citet{silverman1986} for density estimation bandwidths.

For the homoscedastic case the results indicate a poor fit of the respective non-transformation models, but implementation of the Box--Cox transformation on the response clearly improves the fit according to all tests. An enhanced fit for the after-transformation model is also illustrated by the results corresponding to the heteroscedastic case although in this case the model can not be rejected even before transformation.    

As our second application we consider the heteroscedastic location-scale model for the Canadian cross-section wage data and the Italian GDP data; these data are also discussed in \citet{racine2017a} in the context of the non-transformation model. For the Canadian wage data there are $n=205$ observations with `age' consider as predictor for `logwage'. For the Italian GDP data there are $n=1008$ observations with `year' considered as predictor for `GDP'. Our findings (see Table 13) show that for the Canadian wage data the introduction of the transformation model seems again to improve the fit according to the KS and CM tests, while the CF-based test is robust in this respect and indicates a non-fit. For the Italian data however quite the opposite holds: The KS and CM tests indicate that neither the non-transformation nor the transformation model is appropriate, while the CF-based test shows a remarkably improved fit that clearly favours the transformation model. These results are partly in line with \citet{racine2017a} as they also find an insignificant KS statistic for the Canadian wage data but at the same test reject the location-scale presumption for the Italian data. On the other hand our findings indicate that while performing a Box--Cox transformation on the response might still lead to the same conclusion, there exist cases where this transformation could enhance the fit of the underlying model.

\begin{table}[tp]
	\centering\scriptsize
	\setlength\tabcolsep{4pt}
	\caption{Estimates of the transformation parameter under the four considered models
						when applied to the ultrasonic calibration data, along with the
						$p$-values of the tests for model validity.}\label{tab:application.ultrasonic2}
	 \begin{tabular}{lccccc}
		\toprule
		&	 & \multicolumn{2}{c}{Homoskedastic case} & \multicolumn{2}{c}{Heteroskedastic case} \\ \cmidrule(lr){3-4} \cmidrule(lr){5-6}
		& Test & No transformation & Box--Cox & No transformation & Box--Cox \\
		\midrule
		Parameter estimate &  & n/a & $\hat\vartheta =  0.458$  & n/a & $\hat\vartheta = -0.436$ \\
		\midrule
		Test for validity & KS & 0.055 & 0.366 & 0.347 & 0.751 \\
		& CM & 0.002 & 0.138 & 0.363 & 0.568 \\
		& $\Delta_{n,W}$ ($c=1$) & 0.025 & 0.403 & 0.132 & 0.294 \\
		\bottomrule
	\end{tabular}
\end{table}

\begin{table}[tp]
	\centering\scriptsize
	\setlength\tabcolsep{4pt}
	\caption{Results for the Canadian cross-section wage data and Italian GDP data, along with the
						$p$-values of the tests for model validity.}\label{tab:application.canada2}
	 \begin{tabular}{lccccc}
		\toprule
		&	 &  \multicolumn{2}{c}{Canadian wage data}  & \multicolumn{2}{c}{Italian GDP data} \\ \cmidrule(lr){3-4} \cmidrule(lr){5-6}
		& Test & No transformation & Box--Cox & No transformation & Box--Cox \\
		\midrule
		Parameter estimate && n/a & $\hat\vartheta = 2.842$ & n/a & $\hat\vartheta = -0.202$ \\
		\midrule
		Test for validity & KS & 0.198 & 0.331 & <0.001 & <0.001 \\
		& CM & 0.074 & 0.128 & <0.001 & <0.001 \\
		& $\Delta_{n,W}$ ($c=1$) & 0.021 & 0.020 & <0.001 & 0.960 \\
		\bottomrule
	\end{tabular}
\end{table}

\section{Conclusions}
New tests for the validity of the heteroskedastic transformation model are proposed which are based on the well known factorization property of the joint characteristic function into its corresponding marginals. The asymptotic null distribution is derived and the consistency of the new criteria is shown. A Monte Carlo study is included by means of which a resampling version of the proposed method is compared to earlier methods and shows that the new test, aside from being computationally convenient, compares well and often outperforms its competitors, particularly under heavy tailed error distributions. A further Monte Carlo study of characteristic-function based tests for symmetry and normality of regression errors exhibit analogous favourable features. Finally a couple of illustrative applications on real data lead to interesting conclusions.  

\section*{Acknowledgements}
The work of the first author was partially supported by the grant GA\v{C}R 18-08888S.
The work of the third author was partially supported by OP RDE project No.\ CZ.02.2.69/0.0/0.0/16\_027/0008495, International Mobility of Researchers at Charles University.

\section{Appendix}
\subsection{Assumptions}

 We start with formulation of the assumptions and then we give the assertions on behavior of the test statistics under both the null hypothesis and some alternatives.

   \begin{itemize}

  \item[(A.1)] $(Y_j,\boldsymbol X_j),\,j=1,\ldots, n$, are i.i.d. random vectors, where the covariates $\boldsymbol X_j,\,j=1,\ldots, n$,
      have a compact support $\mathcal{R}_{X}$ with $\mathcal{R}_{X}\subset \mathbb{R}^p$.

      \medskip

      \item[(A.2)] We use a product kernel $K(\boldsymbol y)=\prod_{s=1}^{p} k(y_s),\quad  \boldsymbol y= (y_1,\ldots, y_{p})^\top$,  with $k(\cdot)$ being  symmetric  and continuous  in  $[-1,1]$,  and satisfying
          $$
          \int_{-1}^1 u^r k(u) du=\delta_{r,0},\quad r=0,\ldots, p,\quad \int_{-1}^1 u^{p+1} k(u) du\ne 0,
          $$
          where $\delta_{r,s}$ stands for Kronecker's delta.

           \medskip

          \item[(A.3)]  The bandwidth $h=h_n$ satisfies
          $$
          (n h^{p})^{-1} +   n h^{p +\delta}\to 0,\quad as\quad n\to \infty
          $$
           for some $\delta >0$.

             \medskip

          \item[(A.4)]  It holds that $\mathbb{E} ||\boldsymbol X_j||^2<\infty$,  and that $\boldsymbol X_j$  has the density  $f(\cdot)$ satisfying
     \begin{align*}
    0<&\inf_{{\boldsymbol{x}}\in \mathcal{R}_{X}} f(\boldsymbol x)\leq \sup_{{\boldsymbol {x}}\in \mathcal{R}_{X}} f(\boldsymbol x)<\infty,\\
     \Big| f& (\boldsymbol x)- L(f, \boldsymbol y, \boldsymbol x-\boldsymbol y, s_1)\Big|\leq
      ||\boldsymbol x-\boldsymbol y||^{s_1+\delta_1} d_1(\boldsymbol
      y),
     \end{align*}
     where $L(f, \boldsymbol y,\boldsymbol x-  \boldsymbol y, s_1)$
      is the Taylor expansion of
      the density $f$ of order $s_1$ at $\boldsymbol x$,  $\delta_1>0$ and
      $\mathbb{E}|d_1(\boldsymbol X_j)|^{2}<\infty, $ for some
      $s_1+1\geq  p/2$.

           \medskip

          \item[(A.5)]  It is assumed  $m_{\bb {\vartheta}_0}(\boldsymbol x),\,\sigma_{\bb {\vartheta}_0}(\boldsymbol x),\, \boldsymbol x \in  {\mathcal{R}}_{X}$ satisfies
           \begin{align*}
     \Big| m_{\bb {\vartheta}_0} &(\boldsymbol x)- L_{\bb {\vartheta}_0}(m, \boldsymbol
      y_0,\boldsymbol x- \boldsymbol y_0, s_2)\Big|\leq ||\boldsymbol x-\boldsymbol y_0||^{s_2+\delta_2} d_2(\boldsymbol y_0),\\
      \Big| \sigma_{\bb {\vartheta}_0} &(\boldsymbol x)- L_{\bb {\vartheta}_0}(\sigma, \boldsymbol
      y_0,\boldsymbol x- \boldsymbol y_0, s_3)\Big|\leq ||\boldsymbol x-\boldsymbol y_0||^{s_3+\delta_2} d_3(\boldsymbol y_0)
     \end{align*}
     where $L_{\bb {\vartheta}_0}(m_{\bb {\vartheta}_0}, \boldsymbol y_0, \boldsymbol x- \boldsymbol y_0, s_2)$
      is the Taylor expansions of regression function $m_{\bb {\vartheta}_0}$ of order $s_2$ at $\boldsymbol y_0$ and
      $\mathbb{E}|d_2(\boldsymbol X_{j})|^{2}<\infty$ for some
        $s_2 \geq  p/2$  and $\mathbb{E} m_{\bb {\vartheta}_0}^2 (\boldsymbol X_{j})<\infty$. Similarly,
        $L_{\bb {\vartheta}_0}(\sigma_{\bb {\vartheta}_0}, \boldsymbol y_0, \boldsymbol x- \boldsymbol y_0, s_3)$
      is the Taylor expansions of regression function $\sigma_{\bb {\vartheta}_0}$ of order $s_3$ at $\boldsymbol y_0$ and
      $\mathbb{E}|d_3(\boldsymbol X_{j})|^{2}<\infty$ for some
        $s_3 \geq  p/2$  and $\mathbb{E} \sigma_{\bb {\vartheta}_0}^2 (\boldsymbol X_{j})<\infty$.

       \medskip

          \item[(A.6)] $\mathcal{L}=\{T_{\boldsymbol { \vartheta}};\, \boldsymbol { \vartheta}\in \Theta\}$
          is a parametric class of  strictly increasing transformations,  $\Theta$ is a open  measurable subset of
          $\mathbb{R}^q$, and for some $\xi>0$,
                  \begin{align*}
     \sup_{||\bb {\vartheta}-\bb {\vartheta}_0|| \leq \xi}\Big|{\mathcal{T}}_{\bb {\vartheta}}(Y_j)&- {\mathcal{T}}_{\bb {\vartheta}_0} (Y_j)
      -\sum_{s=1}^{q} (\vartheta_s - \vartheta_{s0}) \frac{\partial  {\mathcal{T}}_{\bb {\vartheta}}(Y_j) }
      {\partial
      \vartheta_s}\Big|_{\bb {\vartheta}=\bb{\vartheta}_0}\Big|
     /|| \bb{\vartheta}-\bb{\vartheta}_0 ||^{1+\delta_4}
     \\&\leq
d_4(Y_j),
     \end{align*}
where $\mathbb{E}d^2_4(Y_j)<\infty$,   $  \mathbb{E}(|d_4(Y_j)||\boldsymbol X_j)<\infty$,
a.s., and for some  $\delta_4>0$ it holds that
\begin{align*}
 \mathbb{E}&\Big( \frac{\partial  {\mathcal{T}}_{\bb{\vartheta}}(Y_j) }
      {\partial
      \vartheta_s}\Big|_{\bb{\vartheta}=\bb{\vartheta}_0}\Big|\boldsymbol X_{j}\Big)= \frac{\partial \mathbb{E}\big( {\mathcal{T}}_{\bb{\vartheta}}(Y_j)|\boldsymbol X_{j}\big) }
      {\partial
      \vartheta_s}\Big|_{\bb{\vartheta}=\bb{\vartheta}_0},
      \text{ a.s.},
        \\
        \mathbb{E}&\Big( \mathbb{E}\Big( \frac{\partial  {\mathcal{T}}_{\bb{\vartheta}}(Y_j) }
      {\partial
      \vartheta_s}\Big|_{\bb{\vartheta}=\bb{\vartheta}_0}\Big|\boldsymbol
      X_{j}\Big)\Big)^2<\infty,\quad   \mathbb{E} {\mathcal{T}}_{\bb{\vartheta}}^2(Y_j)<\infty.
\end{align*}

\medskip

              \item[(A.7)]  The estimator $\widehat {\boldsymbol { \vartheta}}$ of $\boldsymbol { \vartheta}_0$ (they are $q$-dimensional) satisfies:
               $$ \sqrt n\Big(\widehat {\boldsymbol { \vartheta}}-\boldsymbol { \vartheta}_0\Big)=
               \frac{1}{\sqrt n} \sum_{j=1}^n \boldsymbol g(Y_j, \boldsymbol X_j)+o_P(1)
                  $$
                  where $ \boldsymbol g(Y_j, \boldsymbol X_j)$  is  has zero mean and finite covariance matrix.

\medskip
 \item[(A.8)]  The weight   function is such that $W(t_1,\bb t_2)=w_1(t_1)w_2(\bb t_2)$, where $w_m(\cdot)$ satisfy
                  \begin{align*}
   w_1(t)=w_1(-t), \ t\in\mathbb R,  \int_{-\infty}^\infty & t^2 w_1(t)dt<\infty, \ \ w_2({\boldsymbol{t}})=w_2(-{\boldsymbol{t}}), \boldsymbol{t} \in \mathbb R^p.  
\end{align*}

  \medskip
 \item[(A.9)]  $\mathcal{L}=\{T_{\boldsymbol { \vartheta}};\, \boldsymbol { \vartheta}\in \Theta\}$
          is a parametric class of  strictly increasing transformations,  $\Theta$ is a open  measurable subset of
          $\mathbb{R}^q$ and that for all  $\bm{\vartheta} \in \Theta$
 $$ \Big|{\mathcal{T}}_{\bb {\vartheta}}(Y_j)- {\mathcal{T}}_{\bb {\vartheta}^0} (Y_j)|\leq  d_5(Y_j)||\bb {\vartheta}-\bb {\vartheta}^0||,
$$
with $E|d_5(Y_j)|<\infty$.

  \end{itemize}

\medskip

\noindent{Comments on the assumptions}:
\begin{itemize}

\item Assumptions (A.2) are  (A.3) are quite standard.
\item  Assumption (A.4) requires smoothness of the density
 $f(\cdot)$ of $\boldsymbol X$.
\item Assumption (A.5) formulates the requirements on the  regression function
 $m_{\boldsymbol{\vartheta}_0}( \boldsymbol x)= \mathbb{E} \big({\mathcal{T}}
 _{\bb{\vartheta}} (Y_j)| \boldsymbol X_{j}=\boldsymbol x\big)$.
  Motivation for   assumptions (A.4) and (A.5) are  from \citet{delgado2001}.
\item Assumption (A.7) requires
  that a $\sqrt n$-estimator of  $\boldsymbol { \vartheta}_0$   with an asymptotic representation is available.
	Such estimators are proposed and studied in, e.g., \citet{breiman1985}, \citet{horowitz2009} and \citet{linton2008}.  They  are
either based on a  modified least squares method or on profile likelihood  estimators
 or  on mean square distance from independence.

\item  Assumptions (A.9) and (A.10) are for the considered class of alternatives.

    \end{itemize}

\subsection{ Proofs}

The proofs are quite technical and therefore we present the main steps only. Additionally,
main line of the proofs follows that in \citet{hlavka2011},
however some modifications  and extensions are needed.  Standard technique from nonparametric
regression is applied together with functional central limit theorems.

When  not confused we use short notations:
$$\varepsilon_j={\varepsilon}_{\vartheta_0,j},\quad \widehat{\varepsilon}_j=\widehat{\varepsilon}_{\widehat\vartheta,j}, \quad
\widehat m_n (\boldsymbol x)= \widehat m_{\widehat\vartheta} (\boldsymbol x),\quad
\widehat \sigma_n (\boldsymbol x)= \widehat \sigma_{\widehat\vartheta} (\boldsymbol x).
$$
For simplicity we give the proofs only for $\vartheta$ univariate. Multivariate situation proceeds quite analogously.

\begin{proof}[Proof of Theorem~\ref{thm1}]
By assumption (A.7) and elementary
properties of the functions sinus and cosinus
  \begin{equation} \label{equiv}
  \Delta_{n,W}= \int\int \left|J_{n,1} (t_1,\boldsymbol t_2)+ J_{n,2} (t_1,\boldsymbol t_2)
   \right|^2 W(t_1,\boldsymbol t_2) dt_1 d\boldsymbol t_2,
  \end{equation}
where
\begin{align*}
J_{n,1} (t_1, \boldsymbol  t_2)={}&{}\frac{1}{\sqrt n} \sum_{j=1}^n \Big[\big(\cos ( t_1
 \widehat{\varepsilon}_j)-E\cos ( t_1
 {\varepsilon}_j)\big)
 g_+ (\boldsymbol t_2^\top \boldsymbol X_j)\\
 &{}+\big(\sin ( t_1 \widehat
 {\varepsilon}_j)-E\sin ( t_1
 {\varepsilon}_j)\big)
 g_-(\boldsymbol t_2^\top \boldsymbol  X_j) \Big],\\
J_{n,2} (t_1,\boldsymbol t_2)={}&{}
  \frac{1}{n^{3/2}} \sum_{j=1}^n \sum_{v=1}^n \Big[\big(\cos(t_1\widehat {\varepsilon}_v)-E\cos ( t_1
 {\varepsilon}_v)\big)g_+(\boldsymbol  t_2^\top \boldsymbol  X_j)
  \\
   &{}+ \big(\sin(t_1\widehat \varepsilon_v)-E\sin ( t_1
 {\varepsilon}_v)\big) g_-(\boldsymbol t_2^\top\boldsymbol   X_j)
  \Big]
\end{align*}
with
\begin{align*}
g_+(\boldsymbol t_2^\top \boldsymbol X)=&\cos (\boldsymbol t_2^\top \boldsymbol X)+ \sin(\boldsymbol t_2^\top \boldsymbol  X)- E\big(\cos (\boldsymbol t_2^\top \boldsymbol X)+ \sin(\boldsymbol t_2^\top \boldsymbol  X)\big),\\
g_-(\boldsymbol t_2^\top \boldsymbol X)=&\cos (\boldsymbol t_2^\top \boldsymbol X)- \sin(\boldsymbol t_2^\top \boldsymbol  X)- E\big(\cos (\boldsymbol t_2^\top \boldsymbol X)- \sin(\boldsymbol t_2^\top \boldsymbol  X)\big).
\end{align*}
 A useful asymptotic representation for $J_{n,1} (t_1,\boldsymbol  t_2)$
 provides \ref{lem1}
   while negligibility of $J_{n,2} (t_1, \boldsymbol t_2)$ is proved quite analogously and therefore its proof is omitted.
\end{proof}

\begin{lem}\label{lem1}
 Let the assumptions of Theorem~\ref{thm1} be satisfied then, as $n\to
 \infty$,
\[
\int\int \Big| J_{n,1} (t_1,\boldsymbol t_2)- Q_{\varepsilon,\boldsymbol X,c}(t_1,\boldsymbol  t_2)- Q_{\varepsilon,\boldsymbol X,s}(t_1,\boldsymbol t_2)-L_{\varepsilon,\boldsymbol X}(t_1,\boldsymbol t_2)
\Big|^2 W(t_1,\boldsymbol t_2) dt_1 d \boldsymbol t_2\to^P 0,
\]
     where
     \begin{align*}
  Q_{\varepsilon,\boldsymbol X, c}(t_1, \boldsymbol  t_2)&= \frac{1}{\sqrt n}\sum_{j=1}^n\big\{( \cos(t_1 \varepsilon_j) - E\big(\cos(t_1 \varepsilon_j))) + t_1\varepsilon_jS_{\varepsilon}(t_1) \\
  &\qquad\qquad\qquad + t_1(\varepsilon_j^2-1)C_{\varepsilon}'(t_1)/2\big\} g_+(\boldsymbol  t_2^\top \boldsymbol X_j) ,
 \\
  Q_{\varepsilon,\boldsymbol  X, s}(t_1,\boldsymbol t_2)&= \frac{1}{\sqrt n}\sum_{j=1}^n\big\{(\sin(t_1 \varepsilon_j)- E\big(\sin(t_1 \varepsilon_j)\big))- t_1\varepsilon_jS_{\varepsilon}(t_1) \\
  &\qquad\qquad\qquad
 - t_1 (\varepsilon_j^2-1)C_{\varepsilon}'(t_1)/2\big\} g_-(\boldsymbol  t_2^\top \boldsymbol  X_j)
 ,\\
 L_{\varepsilon,\boldsymbol X}(t_1,\boldsymbol t_2)&= \sqrt n(\widehat \vartheta-\vartheta_0)
  H_{\vartheta_0,1}(t_1,\boldsymbol t_2)
 \end{align*}
where $C_{\varepsilon}$ and $S_{\varepsilon}$ are the real and the imaginary part of the CF of $\varepsilon_j$ and $C'_{\varepsilon}$ and
$S'_{\varepsilon}$ are respective derivatives  and  $ H_{\vartheta_0,1}(t_1,\boldsymbol t_2) $ is defined in Theorem 1 with $q=1$.
\end{lem}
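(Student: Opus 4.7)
The plan is to reduce $J_{n,1}(t_1,\boldsymbol t_2)$ to the three stated pieces by linearizing $\cos(t_1\widehat\varepsilon_j)$ and $\sin(t_1\widehat\varepsilon_j)$ around $\cos(t_1\varepsilon_j)$ and $\sin(t_1\varepsilon_j)$, and then carefully accounting for the three independent sources of estimation error in $\widehat\varepsilon_j-\varepsilon_j$: the plug-in of $\widehat\vartheta$ for $\vartheta_0$, the kernel estimator $\widehat m$ of $m_{\vartheta_0}$, and the kernel estimator $\widehat\sigma$ of $\sigma_{\vartheta_0}$. A second-order Taylor expansion gives
\[
\cos(t_1\widehat\varepsilon_j)=\cos(t_1\varepsilon_j)-t_1\sin(t_1\varepsilon_j)(\widehat\varepsilon_j-\varepsilon_j)-\tfrac12 t_1^2\cos(t_1\tilde\varepsilon_j)(\widehat\varepsilon_j-\varepsilon_j)^2,
\]
and analogously for the sine, with $\tilde\varepsilon_j$ between $\widehat\varepsilon_j$ and $\varepsilon_j$.

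Next I would split $\widehat\varepsilon_j-\varepsilon_j$ as
\[
\widehat\varepsilon_j-\varepsilon_j=\frac{\mathcal T_{\widehat\vartheta}(Y_j)-\mathcal T_{\vartheta_0}(Y_j)}{\sigma(\boldsymbol X_j)}-\frac{\widehat m(\boldsymbol X_j)-m(\boldsymbol X_j)}{\sigma(\boldsymbol X_j)}-\varepsilon_j\,\frac{\widehat\sigma(\boldsymbol X_j)-\sigma(\boldsymbol X_j)}{\sigma(\boldsymbol X_j)}+R_j,
\]
with $R_j$ of smaller order by (A.4)--(A.6). For the first summand, assumption (A.6) yields $\mathcal T_{\widehat\vartheta}(Y_j)-\mathcal T_{\vartheta_0}(Y_j)=(\widehat\vartheta-\vartheta_0)\,\partial_\vartheta \mathcal T_\vartheta(Y_j)|_{\vartheta_0}+O_P(|\widehat\vartheta-\vartheta_0|^{1+\delta_4}d_4(Y_j))$; plugging this in and invoking the representation in (A.7) produces $L_{\varepsilon,\boldsymbol X}$ after taking conditional expectations that generate the two distinct contributions inside $H_{\vartheta_0,1}$ (one from the $j$-th term itself, one from the cross terms that arise when $\partial_\vartheta\mathcal T_\vartheta(Y_v)$ interacts with another sample point $v$ through the $\sqrt n$-score). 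For the second and third summands I would insert the standard kernel expansions
\[
\widehat m(\boldsymbol x)-m(\boldsymbol x)=\frac{1}{nh^p f(\boldsymbol x)}\sum_{v=1}^n K\Bigl(\tfrac{\boldsymbol x-\boldsymbol X_v}{h}\Bigr)\sigma(\boldsymbol X_v)\varepsilon_v+o_P(n^{-1/2}),
\]
\[
\widehat\sigma^2(\boldsymbol x)-\sigma^2(\boldsymbol x)=\frac{1}{nh^p f(\boldsymbol x)}\sum_{v=1}^n K\Bigl(\tfrac{\boldsymbol x-\boldsymbol X_v}{h}\Bigr)\sigma^2(\boldsymbol X_v)(\varepsilon_v^2-1)+o_P(n^{-1/2}),
\]
valid under (A.2)--(A.5); the $\widehat\sigma$ contribution then follows by a further Taylor step $\widehat\sigma-\sigma=(\widehat\sigma^2-\sigma^2)/(2\sigma)+\text{smaller}$.

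Substituting these expansions into the linearized trigonometric identities and multiplying by $g_\pm(\boldsymbol t_2^\top\boldsymbol X_j)$ produces sums of the form
\[
\frac{1}{n^{3/2}}\sum_{j,v}\frac{1}{h^p f(\boldsymbol X_j)}K\Bigl(\tfrac{\boldsymbol X_j-\boldsymbol X_v}{h}\Bigr)\psi(\varepsilon_j)g_\pm(\boldsymbol t_2^\top\boldsymbol X_j)\,\phi(\varepsilon_v),
\]
with $\psi$ equal to $\sin$ or $\cos$ and $\phi(\varepsilon_v)$ equal to $\varepsilon_v$ or $\varepsilon_v^2-1$. Interchanging the two summations and invoking the uniform convergence of the inner $j$-average (the Nadaraya--Watson regressogram applied to $\psi(\varepsilon_j)g_\pm(\boldsymbol t_2^\top\boldsymbol X_j)$) to its limit $E[\psi(\varepsilon_1)g_\pm(\boldsymbol t_2^\top\boldsymbol X_1)\mid\boldsymbol X_1=\boldsymbol X_v]$ collapses the double sum to a single $\sqrt n$-normalised sum over $v$. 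By independence of $\varepsilon_1$ and $\boldsymbol X_1$ under the null, these conditional expectations separate into products, yielding exactly $S_\varepsilon(t_1)g_\pm(\boldsymbol t_2^\top\boldsymbol X_v)$ and $C_\varepsilon'(t_1)g_\pm(\boldsymbol t_2^\top\boldsymbol X_v)$ (the derivatives appearing via the Taylor expansion in $\sigma$). This accounts for the extra summands beyond $\cos(t_1\varepsilon_j)-E\cos(t_1\varepsilon_j)$ and $\sin(t_1\varepsilon_j)-E\sin(t_1\varepsilon_j)$ that define $Q_{\varepsilon,\boldsymbol X,c}$ and $Q_{\varepsilon,\boldsymbol X,s}$.

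Finally, the quadratic remainders from the Taylor expansion of sine and cosine contribute at most $t_1^2\max_j|\widehat\varepsilon_j-\varepsilon_j|^2$, which is $o_P(n^{-1/2})$ uniformly under (A.3)--(A.5), and because $\int t_1^2 w_1(t_1)dt_1<\infty$ by (A.8) the weighted $L^2$ integral of these remainders is $o_P(1)$. The bias contributions from the kernel estimators are similarly controlled by (A.3)--(A.5). I expect the main obstacle to be the interchange step in the double sum: showing rigorously that the inner $j$-average can be replaced by its deterministic limit uniformly in $v$ and in $(t_1,\boldsymbol t_2)$, with an error that still integrates to $o_P(1)$ against $W(t_1,\boldsymbol t_2)$. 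This requires uniform bandwidth-dependent rates for the kernel smoothers applied to the bounded but $(t_1,\boldsymbol t_2)$-indexed functions $\psi(\varepsilon_j)g_\pm(\boldsymbol t_2^\top\boldsymbol X_j)$, handled by combining (A.2)--(A.4) with a stochastic equicontinuity argument in the spirit of \citet{hlavka2011}, together with the moment bound $\int t_1^2 w_1(t_1)dt_1<\infty$ and the integrability of $w_2$ implicit in (A.8).
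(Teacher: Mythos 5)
Your overall route coincides with the paper's: Taylor-expand $\cos(t_1\widehat\varepsilon_j)$ and $\sin(t_1\widehat\varepsilon_j)$, split $\widehat\varepsilon_j-\varepsilon_j$ into the transformation-parameter, regression and scale contributions, linearize the kernel estimators, collapse the resulting double sums by replacing the inner kernel average with its conditional expectation (which factorizes under the null and yields the $t_1\varepsilon_jS_\varepsilon(t_1)$ and $t_1(\varepsilon_j^2-1)C'_\varepsilon(t_1)/2$ corrections), and absorb the quadratic remainders using $\int t_1^2w_1(t_1)\,dt_1<\infty$. This is exactly the paper's strategy, which likewise delegates the $\widehat m$- and $\widehat\sigma$-induced terms (its $A_{2,n}$ and $B_{2,n}$) to the results of Hl\'avka et al.\ (2011) and flags the uniformity of the double-sum collapse as the technical core.

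There is, however, one genuine gap in your bookkeeping of the $\vartheta$-dependence, and it concerns precisely the part of the lemma that is new relative to the non-transformation case. The kernel expansions you display for $\widehat m(\boldsymbol x)-m(\boldsymbol x)$ and $\widehat\sigma^2(\boldsymbol x)-\sigma^2(\boldsymbol x)$ are the linearizations of the estimators built from the \emph{correctly} transformed responses $\mathcal T_{\vartheta_0}(Y_v)$. The residuals, however, use $\widehat m_{\widehat\vartheta}$ and $\widehat\sigma_{\widehat\vartheta}$, which smooth $\mathcal T_{\widehat\vartheta}(Y_v)$; the differences $\widehat m_{\widehat\vartheta}-\widehat m_{\vartheta_0}$ and $\widehat\sigma^2_{\widehat\vartheta}-\widehat\sigma^2_{\vartheta_0}$ are kernel averages over $v$ of $\mathcal T_{\widehat\vartheta}(Y_v)-\mathcal T_{\vartheta_0}(Y_v)\approx(\widehat\vartheta-\vartheta_0)\,\partial_\vartheta\mathcal T_\vartheta(Y_v)|_{\vartheta_0}$ (the paper's terms $A_{1,n}$ and $B_{3,n}$), hence of exact order $n^{-1/2}$ and not absorbable into your $o_P(n^{-1/2})$ remainders. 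After the same double-sum collapse these are what generate the second summand of $H_{\vartheta_0,1}$, i.e.\ the expectation involving $(1+\varepsilon_1\varepsilon_2)/\sigma(\boldsymbol X_2)$ at a second copy $(\boldsymbol X_2,\varepsilon_2)$. Your attribution of this piece to ``cross terms \dots through the $\sqrt n$-score'' is not the correct mechanism: in the lemma $\sqrt n(\widehat\vartheta-\vartheta_0)$ is kept as a single $O_P(1)$ factor, and the representation (A.7) is only invoked afterwards, in Theorem~\ref{thm1}, to write the last row of $Z_0$ as $\boldsymbol g^\top\boldsymbol H_{\vartheta_0,q}$. Followed literally, your expansion would deliver $L_{\varepsilon,\boldsymbol X}$ with only the first (single-observation) part of $H_{\vartheta_0,1}$ and the stated limit would not be matched; the fix is to carry the $\vartheta$-dependence of $\widehat m$ and $\widehat\sigma^2$ explicitly, as the paper does via $A_{1,n}$ and $B_{3,n}$. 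The remaining steps are at the same level of rigour as the paper's own sketch.
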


\begin{proof}
Recall that the residuals
$\widehat \varepsilon_j$
  can be expressed as
 \begin{equation}\label{resid2}
 \widehat \varepsilon_j= \varepsilon_{j} +\varepsilon_j\left\{ \frac{\sigma_{\vartheta_0} (X_j)}{\widehat
 \sigma_n(X_j)}-1\right\} +\frac{m_{\vartheta_0}(X_j)-\widehat m_n(X_j)}{
 \widehat\sigma_n(X_j)}+\frac{{\mathcal{T}}_{\bb {\vartheta}}(Y_j)-{\mathcal{T}}_{\bb {\vartheta}_0}(Y_j)}{\widehat\sigma_n(X_j)},\, j=1,\ldots,n,
\end{equation}
 and then by Taylor expansion  and smoothness of ${\mathcal{T}}_{\vartheta}(Y_j) $ w.r.t. $\vartheta$ we have
 \begin{equation}\begin{split}\label{taylor}
  {}&{} \cos (t_1\widehat \varepsilon_j) \\
 {}&{}= \cos (t_1\varepsilon_j) \\
 &{}\quad- t_1\sin (t_1\varepsilon_j) \Big[ \varepsilon_j \left\{ \frac{\sigma_{\vartheta_0} (X_j)}{\widehat
	 \sigma_n(X_j)}-1\right\} +\frac{m_{\vartheta_0} (X_j)-\widehat m_n(X_j)}{
	 \widehat\sigma_n(X_j)} +\frac{{\mathcal{T}}_{\widehat\vartheta}(Y_j)-{\mathcal{T}}_{\vartheta_0}(Y_j)}
	 {\widehat\sigma_n(X_j)} \Big] \\
 &{}\quad+ t_1^2 R^c_{nj}(t_1),
  \end{split}\end{equation}
  $j=1,\ldots,n$, where  $R^c_{nj}(t)$'s are remainders.
  Similar relations can be obtained for the $\sin (t\widehat
  \varepsilon_j)$.
   Comparing the present  situation with that considered in \citet{hlavka2011} there is an additional parameter $\theta$ which influences the behavior of the treated variables.  We  notice
  \begin{align*}
   &{}\widehat m_{ \vartheta}(\boldsymbol X_j)-m_{\vartheta_0}(\boldsymbol X_j)\\ ={}&{}
   \frac{1}{ n h^p \widehat f(\bb X_j)}\sum_{v=1}^n K\Big(\frac{\bb X_v-\bb X_j}{h}\Big)
   \Big({\mathcal{T}}_{\vartheta}(Y_v) -{\mathcal{T}}_{\vartheta_0}(Y_v)\Big) \\
   &{}+ \frac{1}{ n h^p\widehat f(\bb X_j)}\sum_{v=1}^n K\Big(\frac{\bb X_v-\bb X_j}{h}\Big)\Big(\varepsilon_{v}\sigma_{\vartheta_0}
(\bb X_v) + m_{\vartheta_0}(\boldsymbol X_v) -m_{\vartheta_0}(\boldsymbol X_j)\Big)\\
={}&{} A_{1,n}(\bb X_j,\vartheta)+  A_{2,n}(\bb X_j, \vartheta),say
\end{align*}
and
\begin{align*}
&\widehat \sigma_{\vartheta}^2 (\bb X_j) \\
{}&{}=\frac{1}{ n h^p \widehat f(\bb X_j)}\sum_{v=1}^n K\Big(\frac{\bb X_v-\bb X_j}{h}\Big)
\Big({\mathcal{T}}_{\vartheta}(Y_v) -{\mathcal{T}}_{\vartheta_0}(Y_v)
 +\varepsilon_{v}\sigma_{\vartheta_0}(\bb X_j) \\
 &{}\quad
 + m_{\vartheta_0}(\boldsymbol X_v) -m_{\vartheta_0}(\boldsymbol X_j)\Big)^2\\
 {}&{}=\frac{1}{ n h^p\widehat f(\bb X_j)}\sum_{v=1}^n K\Big(\frac{\bb X_v-\bb X_j}{h}\Big)
\Big({\mathcal{T}}_{\vartheta}(Y_v) -{\mathcal{T}}_{\vartheta_0}(Y_v)\Big)^2\\
&\quad+
 \frac{1}{ n h^p  \widehat f(\bb X_j)}\sum_{v=1}^n K\Big(\frac{\bb X_v-\bb X_j}{h}\Big)\Big(\varepsilon_{v}\sigma_{\vartheta_0}(\bb X_j)
 + m_{\vartheta_0}(\boldsymbol X_v) -m_{\vartheta_0}(\boldsymbol X_j)\Big)^2\\
 &\quad+2 \frac{1}{ n h^p \widehat f(\bb X_j)}\sum_{v=1}^n K\Big(\frac{\bb X_v-\bb X_j}{h}\Big)
\Big({\mathcal{T}}_{\vartheta}(Y_v) -{\mathcal{T}}_{\vartheta_0}(Y_v)\Big)
 \varepsilon_{v}\sigma_{\vartheta_0}(\bb X_v)\\
  &\quad+2 \frac{1}{ n h^p \widehat f(\bb X_j)}\sum_{v=1}^n K\Big(\frac{\bb X_v-\bb X_j}{h}\Big)
\Big({\mathcal{T}}_{\vartheta}(Y_v) -{\mathcal{T}}_{\vartheta_0}(Y_v)\Big)
\Big( m_{\vartheta_0}(\boldsymbol X_v) -m_{\vartheta_0}(\boldsymbol X_j)\Big)\\
={}& B_{1,n}(\bb X_j,\vartheta)+  B_{2,n}(\bb X_j, \vartheta)+  B_{3,n}(\bb X_j, \vartheta) +B_{4,n}(\bb X_j, \vartheta),say
 \end{align*}
  The terms  $A_{1,n}(\bb X_j,\vartheta), A_{2n}(\bb X_j, \vartheta),  B_{2,n} (\bb X_j, \vartheta), B_{3,n}(\bb X_j,\vartheta)$ are influential while the others are properly negligible. The desired  property of $ A_{2n}(\bb X_j, \vartheta),  B_{2,n} (\bb X_j, \vartheta)$ follow from  results in \citet{hlavka2011} and  are formulated below, the remaining   terms will be discussed  shortly later.

 Proceeding as in \citet{hlavka2011} we find out that
 \begin{multline*}
 \int \int \Big( Q_{\varepsilon,\boldsymbol X, c}(t_1, \boldsymbol  t_2) -
  \frac{1}{\sqrt n}\sum_{j=1}^ng_+(\bb t_2 \bb X_j)\Big\{( \cos(t_1 \varepsilon_j) - E\big(\cos(t_1 \varepsilon_j)\big)\\
  -t_1\sin(t_1\varepsilon_j)\Big[\frac{- A_{2,n}(\bb X_j,\vartheta)}{\sigma_{\vartheta_0} (X_j)} - \frac{\varepsilon_j}{2}\Big(\frac{B_{2,n}(\bb X_j,\vartheta)}{\sigma^2_{\vartheta_0}(X_j)}-1\Big)\Big]\Big \}  \Big)^2 W(t_1,\boldsymbol t_2) dt_1 d \boldsymbol t_2\to^P 0,
  \end{multline*}
  Next we study  the remaining influential terms, i.e., terms that depends on $\vartheta$. We start with
  \begin{align*}
  \frac{1}{\sqrt n}&\sum_{j=1}^n g_+(\bb t_2 \bb X_j)
  \Big(-t_1\sin(t_1\varepsilon_j)\frac{- A_{1,n}(\bb X_j,\vartheta)-\varepsilon_j B_{3,n}(\bb X_j,\vartheta) + {\mathcal{T}}_{\vartheta}(Y_j) -{\mathcal{T}}_{\vartheta_0}(Y_j)}{\sigma_{\vartheta_0}(\bb X_j)}\Big)\\
  =\frac{1}{\sqrt n}&\sum_{j=1}^n
 g_+(\bb t_2 \bb X_j) (-t_1\sin(t_1\varepsilon_j))\frac{1}{\sigma_{\vartheta_0}(\bb X_j)}\Big[
  {\mathcal{T}}_{\vartheta}(Y_j) -{\mathcal{T}}_{\vartheta_0}(Y_j) \\&-\frac{1}{nh^p \widehat f(\bb X_j)}\sum_{v=1}^n K\Big(\frac{\bb X_v-\bb X_j}{h}\Big) \big( {\mathcal{T}}_{\vartheta}(Y_v) -{\mathcal{T}}_{\vartheta_0}(Y_v)\big) \big(1+ \varepsilon_{v}  \varepsilon_{j}\big)
\Big] \\
=& \frac{1}{\sqrt n}\sum_{j=1}^n \sum_{v=1}^n K\Big(\frac{\bb X_v-\bb X_j}{h}\Big)
\Big( {\mathcal{T}}_{\vartheta}(Y_j) -{\mathcal{T}}_{\vartheta_0}(Y_j)\Big) \\&\times\Big[\frac{1}{nh^p \widehat f(\bb X_j) \sigma_{\vartheta_0}(\bb X_j)}
(g_+(\bb t_2 \bb X_j) (-t_1\sin(t_1\varepsilon_j))
\\ &- \frac{1}{nh^p \widehat f(\bb X_v) \sigma_{\vartheta_0}(\bb X_v)}g_+(\bb t_2 \bb X_v))
  (-t_1\sin(t_1\varepsilon_v))(1+ \varepsilon_{j}\varepsilon_v)\Big].
 \end{align*}

  This behaves  asymptotically  as (due to assumption (A.7))
  \begin{align*}
  \sqrt n& (\widehat\vartheta-\vartheta_0)\frac{1}{ n}\sum_{j=1}^n \sum_{v=1}^n K\Big(\frac{\bb X_v-\bb X_j}{h}\Big)
 \frac{\partial{\mathcal{T}}_{\vartheta}(Y_j)}{\partial  \vartheta}\Big|_{\vartheta=\vartheta_0}  \\&\times\Big[\frac{1}{nh^p \widehat f(\bb X_j) \sigma_{\vartheta_0}(\bb X_j)}
(g_+(\bb t_2 \bb X_j) (-t_1\sin(t_1\varepsilon_j))
\\ &- \frac{1}{nh^p \widehat f(\bb X_v) \sigma_{\vartheta_0}(\bb X_v)}g_+(\bb t_2 \bb X_v))
  (-t_1\sin(t_1\varepsilon_v))(1+\varepsilon_{j\theta_0}\varepsilon_v\Big]
  \end{align*}

   Since by the assumptions $\sqrt n (\widehat\vartheta-\vartheta_0)=O_P(1)$ it suffices to to study
   \begin{align*}
   C_{1,n}(t_1,\bb  t_2)=&\frac{1}{ n}\sum_{j=1}^n
 \frac{\partial{\mathcal{T}}_{\vartheta}(Y_j)}{\partial  \vartheta}\Big|_{\vartheta=\vartheta_0}  \frac{1}{ \sigma_{\vartheta_0}(\bb X_j)}
g_+(\bb t_2 \bb X_j) (-t_1\sin(t_1\varepsilon_j))
\\
   C_{2,n}(t_1,\bb t_2)=&-\frac{1}{ n}\sum_{j=1}^n \sum_{v=1}^n K\Big(\frac{\bb X_v-\bb X_j}{h}\Big)
 \frac{\partial{\mathcal{T}}_{\vartheta}(Y_j)}{\partial  \vartheta}\Big|_{\vartheta=\vartheta_0}   \frac{1}{nh^p \widehat f(\bb X_v) \sigma_{\vartheta_0}(\bb X_v)}g_+(\bb t_2 \bb X_v))
 \\&\times (-t_1\sin(t_1\varepsilon_v))(1+\varepsilon_{j}\varepsilon_v)
     \end{align*}

By the law of large numbers (uniform in $t_1,\bb t_2$ ), as $n\to \infty$,
\begin{align*}
C_{1,n}(t_1,\bb  t_2)& \to^P E\Big(\frac{\partial{\mathcal{T}}_{\vartheta}(Y_1)}{\partial  \vartheta}\Big|_{\vartheta=\vartheta_0}  \frac{1}{ \sigma_{\vartheta_0}(\bb X_1)}
g_+(\bb t_2 \bb X_1) (-t_1\sin(t_1\varepsilon_1))\Big),\\
 C_{2,n}(t_1,\bb t_2)&\to^P -
 E\Big(\frac{\partial{\mathcal{T}}_{\vartheta}(Y_1)}{\partial  \vartheta}\Big|_{\vartheta=\vartheta_0}
 \frac{1}{\sigma_{\vartheta_0} (\boldsymbol  X_2)} (1+\varepsilon_1 \varepsilon_2)\Big(-t_1\sin(t_1\varepsilon_2)g_+(\boldsymbol t_2^\top\boldsymbol X_2)
 \Big)\Big).
 \end{align*}
  Similarly we proceed with
  $$
   \frac{1}{\sqrt n}\sum_{j=1}^n g_-(\bb t_2 \bb X_j)
  \Big(t\cos(t\varepsilon_j)\frac{- A_{1,n}(\bb X_j,\vartheta)-\varepsilon_j B_{3,n}(\bb X_j,\vartheta) + {\mathcal{T}}_{\vartheta}(Y_j) -{\mathcal{T}}_{\vartheta_0}(Y_j)}{\sigma_{\vartheta_0}(\bb X_j)}\Big)
  $$
 and the proof of Lemma \ref{lem1} is finished.
\end{proof}

\begin{proof}[Proof of Theorem~\ref{thm2}] We proceed as in the proof of Theorem~\ref{thm1} and~Lemma \ref{lem1} and come to the conclusion that
\[
\frac{1}{n}\Delta_{n,W} \to^P  \int_{ \mathbb R^p} | \varphi_{\boldsymbol X,\varepsilon_{\bb{\vartheta}^0}} (\boldsymbol t_2,t_1)-\varphi_{\boldsymbol X} (\boldsymbol t_2)  \varphi_{\varepsilon_{\bb{\vartheta}^0}} ( t_1)
|^2
W(t_1,\boldsymbol t_2) dt_1d\boldsymbol t_2>0. \qedhere
\]
\end{proof}

\end{document}